\newcommand{\vertiii}[1]{{\left\vert\kern-0.25ex\left\vert\kern-0.25ex\left\vert #1 
    \right\vert\kern-0.25ex\right\vert\kern-0.25ex\right\vert}}
\newtheorem{theorem}{Theorem}
\newtheorem{lemma}{Lemma}
\newtheorem{proposition}{Proposition}
\title{Numerical upscaling of discrete network models}
\author{G. Kettil, A. M\r{a}lqvist, A. Mark, M. Fredlund, K. Wester, F. Edelvik}
\date{\today}
\begin{document}

\maketitle
\thispagestyle{empty}

\begin{abstract}
\noindent In this paper a numerical multiscale method for discrete networks is presented. The method gives an accurate coarse scale representation of the full network by solving sub-network problems. The method is used to solve problems with highly varying connectivity or random network structure, showing  optimal order convergence rates with respect to the mesh size of the coarse representation. Moreover, a network model for paper-based materials is presented. The numerical multiscale method is applied to solve problems governed by the presented network model.
\end{abstract}

\section{Introduction}
Network structures are used to model a wide variety of phenomena, such as flow in porous media, traffic flows, elasticity of materials, body deformation in computer graphics, molecular dynamics, and fiber materials. In these applications, the microscale behaviour determines the macroscale properties of the system. Often a full microscale model is difficult or impossible to work with because of the vast computational complexity. Therefore, there is an interest in constructing coarser, but still accurate, representations of the entire system. Such a procedure is sometimes referred to as upscaling or homogenization. In this work a numerical upscaling method for discrete networks is presented.

There exist several numerical upscaling methods for partial differential equations (PDE) based on the idea of homogenization, such as the Heterogeneous Multiscale Method (HMM) \cite{HMM}, the Multiscale Finite Element Method (MsFEM) \cite{MsFEM}, and the more recent works \cite{Efendiev, Owhadi}. The upscaling approach presented in this paper is based on the Localized Orthogonal Decomposition Method (LOD) \cite{LOD1, LOD2}, which in turn is inspired by the Variational Multiscale Method (VMM) \cite{Hughes}. Multiscale methods applied to network problems are for instance investigated by Ewing, Ilev et al.~\cite{Ewing, Iliev} who study the heat conductivity of network materials and develop an upscaling method by solving the heat equation locally over small sub-domains. These local solutions are used to compute an effective global thermal conductivity tensor. Della Rossa et al.~investigate network models  of traffic flows \cite{DellaRossa} and derive a governing PDE for the macroscale by formulating traffic flow equations for single network nodes and interpreting the relations as finite difference approximations. The macroscale parameters are resolved using a two-scale averaging technique. Chu et al.~develop a multiscale method for networks representing flows in a porous medium \cite{Chu}. The medium is modelled as a network where nodes represent pores and edges represent throats. The conductance of each throat is assumed to be given by Hagen-Poiseuille equation, and using mass conservation equations for the flow through the network, a model for the microscale is attained. 

The numerical upscaling method proposed in this work is developed for general unstructured networks. The network is supposed to represent the microscale, and the macroscale is represented by a finite element mesh which is coarse in comparison to the fine scale network. The coarse grid does not have to be related to the network in any way except that both cover the same computational domain, and therefore the method can be applied to arbitrary network geometries. The coarse FEM grid is used to define a macroscale solution space spanned by basis functions defined at each coarse grid node as in standard FEM. The upscaling idea is to modify the coarse basis functions to account for the microscale features of the network. This is accomplished by solving local sub-network problems at each coarse basis function. The modified basis functions are thereafter used to solve a global low-dimensional system resulting in an accurate macroscale solution. The method leads to modified basis functions that decay exponentially, and hence localization of the local sub-network problems can be utilized, reducing the computational  cost considerably while preserving optimal convergence rates.

Moreover, this paper includes a two-dimensional  network model, which can be used to model paper-based materials in form of fiber networks. The macroscale mechanical properties of paper-based materials are of great interest. Paper is a heterogeneous material built up of fibers bonded together into a network structure. The mechanical properties of paper depend primarily on the properties of the fibers and the bonds between them. In \cite{Mark1, Mark2, Lic}, computational fluid dynamics and advanced contact modeling are used to simulate the paper forming process. One future aim is to utilize that framework together with the proposed multiscale method to create virtual fiber networks and investigate  the macroscale mechanical properties. A network representation including fibers and bonds is a suitable methodology to study the mechanical properties of paper \cite{Raisanen, Hagglund, Kulachenko}.   Moreover, the varying properties of single fibers and bonds, as well as an interest for fracture propagation simulations, call for an upscaling approach.  The presented network model  is based on forces arising at the nodes when the network is displaced, acting to restore the initial configuration. The network model is similar to lattices models like \cite{Ostoja, Beex}  where edges are represented by springs. Moreover,  angle springs between pair of edges are included. A novelty of the  network model in this work is a third type of force phenomenon resulting in an effect similar to the Poisson effect. Force equilibrium equations at each node result in a matrix equation which can be very large. For a regular network, the model converges to the linear elasticity equation when the length of the network edges tends to zero. The numerical upscaling method is applied to the network model and numerical examples are solved to demonstrate the convergence rates of the method. The examples show how the proposed numerical upscaling method resolves fine scale features which the standard FEM cannot.

The outline of this text is as follows. In Sect.~\ref{ProblemFormulation}, the general problem formulation is stated. Thereafter, in Sect.~\ref{UpscalingMethod}, the theory of the numerical upscaling method is presented. Sect.~\ref{ErrorAnalysis} contains error analysis, and in Sect.~\ref{NetworkModel}, the two-dimensional network model is described. In Sect.~\ref{NumericalExamples},  the network model together with the numerical upscaling method are applied in numerical  examples, showing the convergence rates of the proposed method. Lastly, in Sect.~\ref{Conclusions}, the conclusion and future work are discussed.

\section{Problem formulation}
\label{ProblemFormulation}
Consider a problem modelled by a network with properties governed by a  connectivity matrix $K\in\mathbb{R}^{n\times n}$. The matrix $K$ can for instance be the discrete Poisson operator describing heat conduction, the finite difference discretization of the linear elasticity operator, or represent a more complex model, such as of the mechanics of a fiber network. Let $F\in\mathbb{R}^{n}$ denote the load vector and let the solution vector be denoted $u$, belonging to a vector space $V\subset\mathbb{R}^n$. The network  problem can be stated in two equivalent ways, either:
\begin{align}
\label{FormulationI}
\begin{split}
\textnormal{Find }u:\quad\bar K u = \bar F,
\end{split}
\end{align}
or:
\begin{align}
\label{FormulationII}
\begin{split}
\textnormal{Find }u\in V:\quad v^T Ku=v^T F, \quad \forall v\in V.
\end{split}
\end{align}
In the first formulation, \eqref{FormulationI}, $\bar K$ and $ \bar F$ denotes modifications of $K$ and $F$ by explicitly including the restriction of $u$ to the space $V$, for instance by holding some nodes fixed. To ensure existence and uniqueness of the second formulation, \eqref{FormulationII}, it is assumed that $K$ is symmetric and positive definite on $V$. A matrix $K\in\mathbb{R}^{n\times n}$ is positive definite on a subset $V\subset\mathbb{R}^n$ if $v^TKv>0$ for all nonzero $v\in V$. Moreover, a symmetric positive definite matrix $K$ constitutes a scalar product $\langle u,v\rangle=u^TKv$ on $V$, a property that will be used later.

In Fig. \ref{ThreeNet} three examples of networks are shown. The network in Fig. \ref{nwI} exemplifies a finite difference grid for the unit square, with $K$ as the resulting discretization of the linear elasticity operator. This problem setup can be used to find the node displacements $u$ under applied node forces $F$. To attain a solvable system $Ku=F$, some degrees of freedom have to be prescribed, resulting in the restricted solution space $V$. The network in Fig. \ref{nwII} can represent a conductive medium, governed by the discrete Poisson equation. The temperature at each node is contained in $u$. The network in Fig. \ref{nwIII} is a fiber network building up a paper sheet. The fibers are modelled as chains of edges connected at nodes with bonds between fibers at common network nodes.

\begin{figure*}[ht!]
\centering
\subfigure[]{\includegraphics[trim={3.1cm 1.2cm 2.6cm 0.8cm},clip,width=0.3\linewidth]{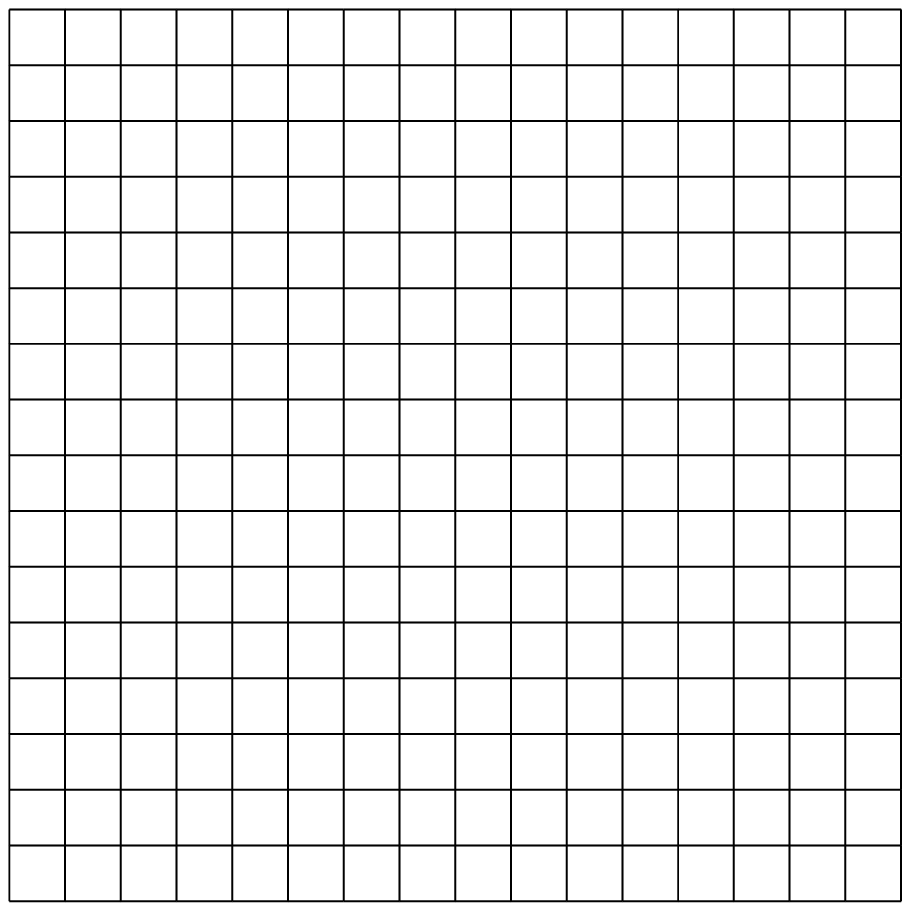}\label{nwI}}
\hspace{0.1cm}
\subfigure[]{\includegraphics[trim={3.7cm 1.2cm 3.1cm 0.8cm},clip,width=0.3\linewidth]{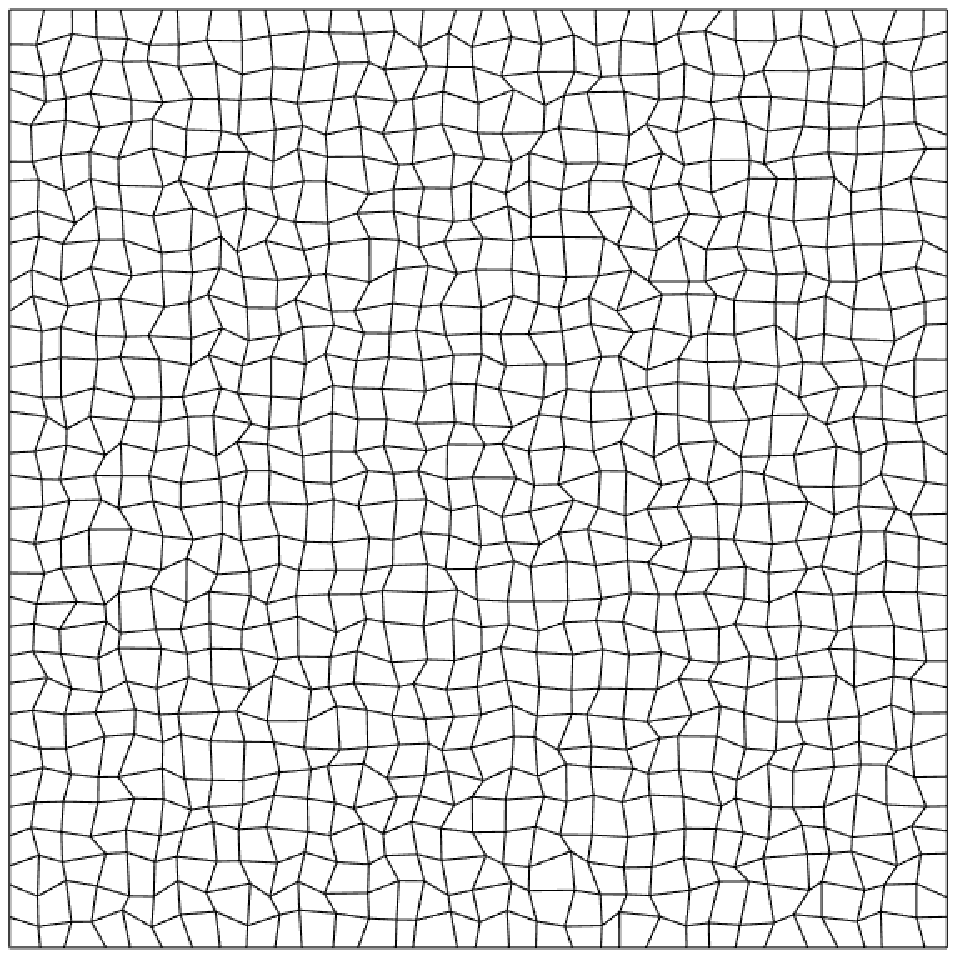}\label{nwII}}
\hspace{0.1cm}
\subfigure[]{\includegraphics[trim={1.0cm 1.1cm 1.5cm 0.7cm},clip,width=0.33\linewidth]{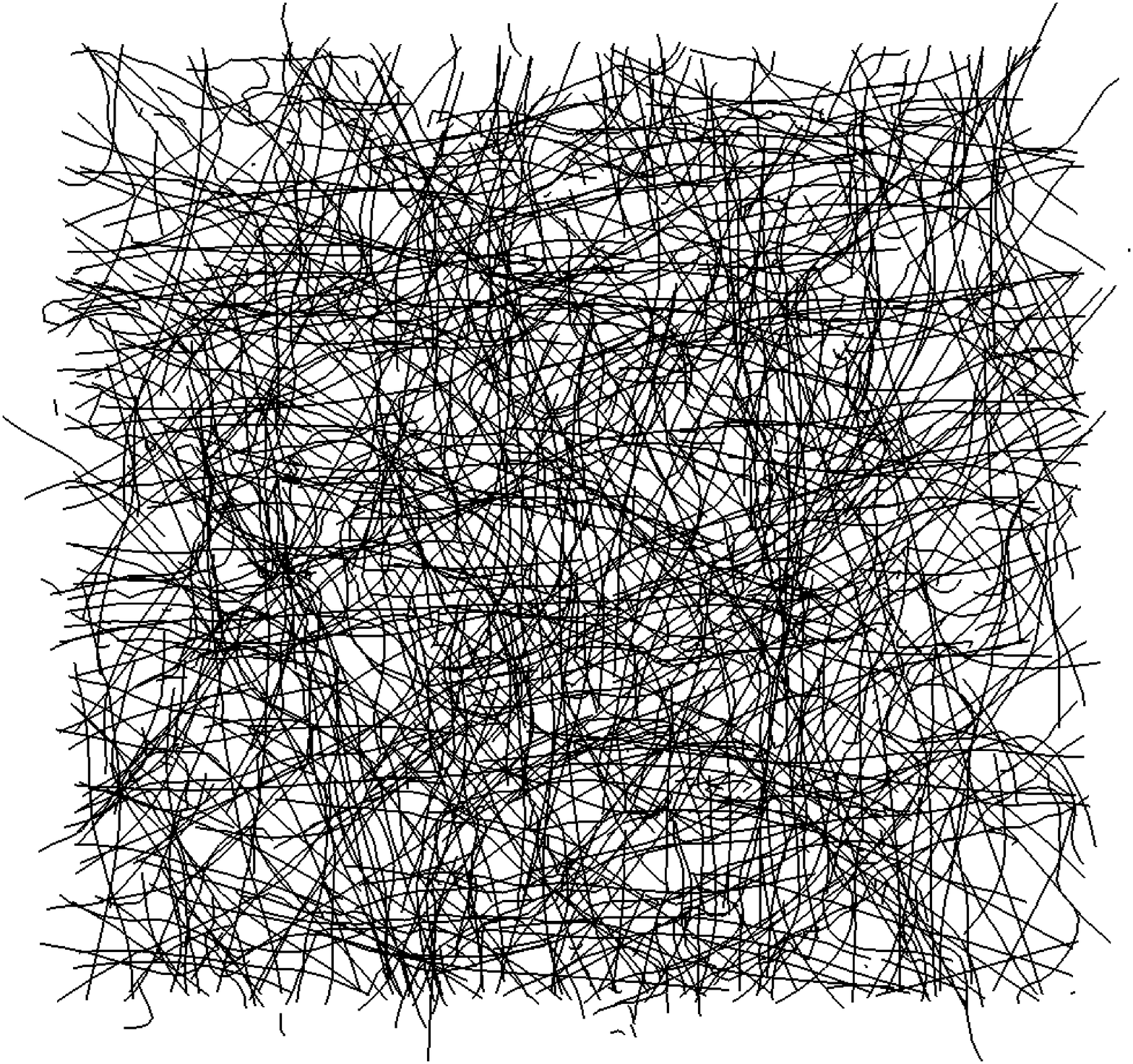}\label{nwIII}}
\caption{{Three examples of networks: a regular square network (a),  a regular square network with randomly perturbated nodes (b), and a fiber network (c) (generated as in \cite{Lic}).}}
\label{ThreeNet}
\end{figure*}

The objective of this paper is to develop a numerical upscaling method for networks, circumventing the computational issues arising when materials of macrosize are considered. The idea is to reduce the size of the system by introducing a subspace $V_\textnormal{ms}\subset V$, as a coarse representation of the network. This space is called the multiscale space and it should fulfil the condition that $\dim V_\textnormal{ms}$ is much lower than $\dim V$. The multiscale solution is attained from the problem
\begin{align*}
\textnormal{Find }u\in V_\textnormal{ms}:\quad v^T Ku_\textnormal{ms}=v^T F, \quad \forall v\in V_\textnormal{ms}.
\end{align*}
The aim is to construct a multiscale space such that an error $\|u-u_\textnormal{ms}\|$ is small. To achieve this, a FE-type coarse space is first introduced, which does not have the desired approximation properties. This coarse space is then modified by solving local sub-networks problems, resulting in the desired multiscale space. In the following section such a numerical homogenization method is presented.

\section{Numerical homogenization of networks}
\label{UpscalingMethod}
Consider a network with $N$ nodes and properties governed by a symmetric and positive semi-definite  matrix $K\in\mathbb{R}^{n\times n}$,  where $n=d\cdot N$ is the number of degrees of freedom of the network, and $d$ denotes the number of degrees of freedom at each node. For instance, for an elastic network, where node displacements are to be solved, the number of degrees of freedom at each node will be two or three, depending on if the network is two- or three-dimensional.  In the following presentation the space is assumed to be two-dimensional, but the method works analogously for three dimensions. Denote by $p_i\in\mathbb{R}^2$ the position of the node corresponding to degree of freedom $i = 1, \dots, n$. Note that groups of $d$ degrees of freedom correspond to the same position.

Let the solution vector be denoted $u\in \mathbb{R}^{n}$. The ordering of nodes and their degrees of freedom is arranged such that if $d=2$, $u(1)$ and $u(2)$ correspond to the first and second degree of freedom of node 1, $u(3)$ and $u(4)$ correspond to the first and second degree of freedom of node 2, and so on, with analogous ordering if $d$ is larger. Here $u(i)$ denotes the $i$:th component of vector $u$. Let $F\in\mathbb{R}^{n}$ denote the load vector.  The system $Ku=F$ is not necessarily solvable without prescribing some degrees of freedom. Consider fixed constraints with zero displacement (non-zero displacement is treated in Section \ref{NonzeroFixation}) and let $\mathcal{N}_D\subset \{1,\dots, n\}$ be the set of indices corresponding to the fixed degrees of freedom. Let $\mathcal{N}=\{1,\dots, n\}\setminus\mathcal{N}_D$. Denote by $V \subset\mathbb{R}^{n}$ the restricted solution space defined by
\begin{align*}
V = \{v\in \mathbb{R}^n : v(i)=0, \quad  i\in \mathcal{N}_D\}.
\end{align*}
The variational formulation of the network displacement problem reads:
\begin{align}
\label{DisplacementProblem}
\begin{split}
\textnormal{Find }u\in V:\quad v^T Ku=v^T F, \quad \forall v\in V.
\end{split}
\end{align}
For the problem to be solvable it is assumed that $K$ in addition to being symmetric, also is positive definite on the restricted solution space $V$.

\subsection{\textbf{Coarse grid representation}}
The overall idea of the upscaling method is to introduce a coarse grid, representing the network at macroscale. See Fig. \ref{ExampleNetworkWithCoarseGridI} for an illustration of a network with a coarse grid representation. At each coarse node, $d$ number of basis functions are defined similarly as in the finite element method. These basis functions span a low dimensional solution space which gives an insufficient description of the fine scale features. To include the fine scale information, the basis functions are modified by solving local sub-network systems. Thereafter the modified basis functions are used to solve a global system, smaller than the full system including all nodes, resulting in an upscaled approximation of the original problem. In what follows, the details of this procedure are described.

\begin{figure*}[ht!]
\centering
\subfigure[Network and FEM quadrilateration.]{\includegraphics[trim={2cm 1cm 2cm 0.5cm},clip,width=0.31\linewidth]{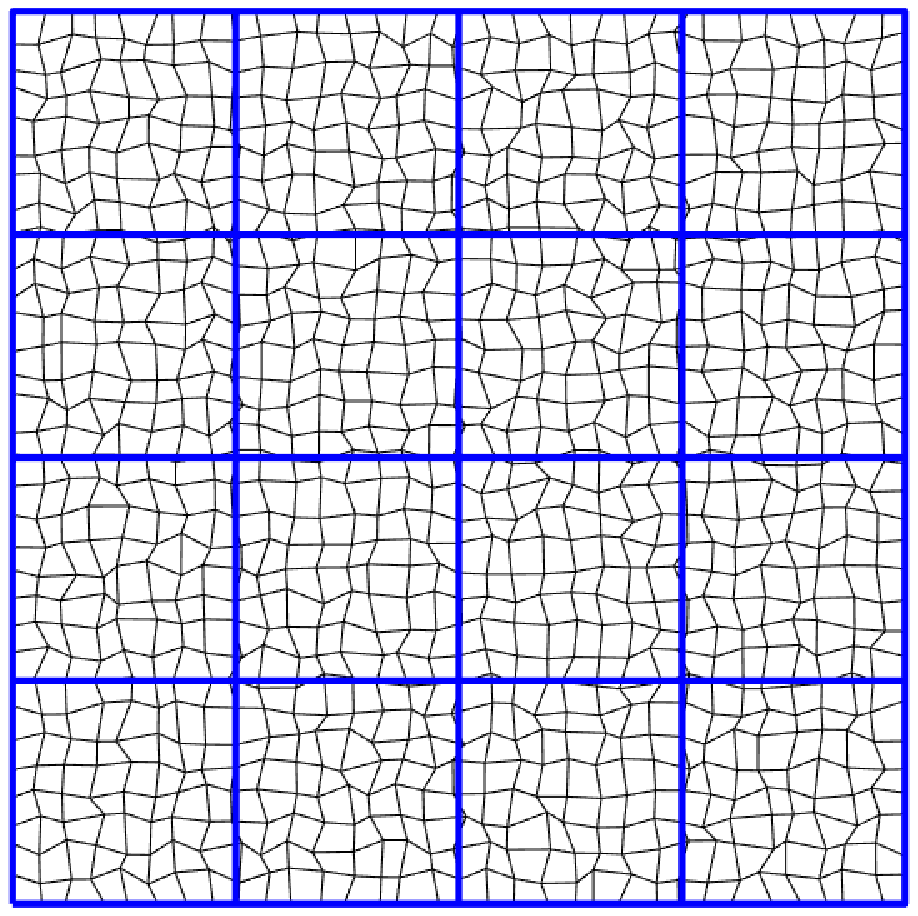} \label{ExampleNetworkWithCoarseGridI}}
\hspace{0.1cm}
\subfigure[A bilinear basis function $\Lambda_i:\mathbb{R}^2\to\mathbb{R}$ of the FEM grid.]{\includegraphics[trim={4cm 1.4cm 3.5cm 3cm},clip,width=0.31\linewidth]{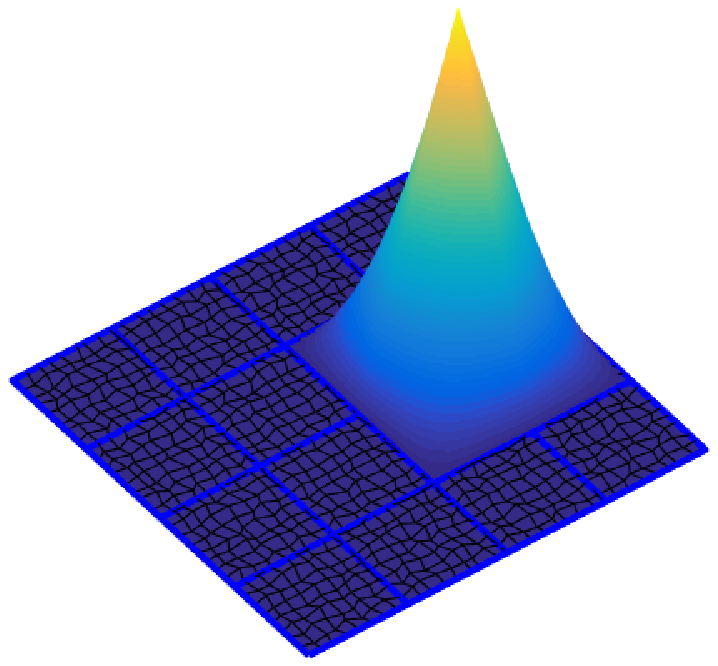} \label{ExampleNetworkWithCoarseGridII}}
\hspace{0.1cm}
\subfigure[The interpolated bilinear basis function $\lambda_i\in V_H\subset\mathbb{R}^n$.]{\includegraphics[trim={4cm 1.4cm 3.5cm 3cm},clip,width=0.31\linewidth]{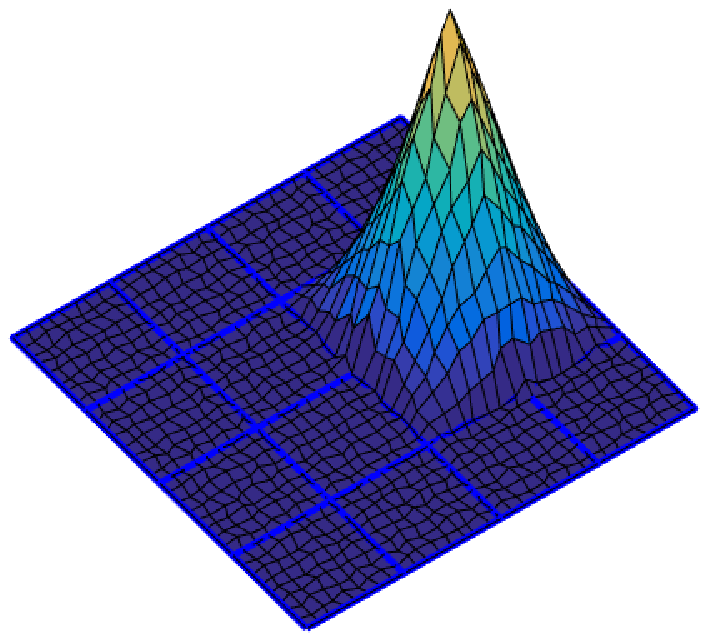} \label{ExampleNetworkWithCoarseGridIII}}
\caption{{Example of a square network with a FEM quadrilateration representation. }}
\label{ExampleNetworkWithCoarseGrid}
\end{figure*}

Let the coarse grid be denoted $\mathcal{T}$, containing $M$ coarse nodes and let $m=d\cdot M$ be the degrees of freedom of the coarse grid. One choice of coarse grid is a quadrilateration as in Fig. \ref{ExampleNetworkWithCoarseGridI}. It is assumed that the coarse grid constitutes a good approximation of the computational domain of the network, and that each coarse element contains at least one network node and that $N> M$. Let $\Lambda_i:\mathbb{R}^2\to\mathbb{R}, \,i=1,\dots, m$, denote the coarse nodal basis functions of the grid $\mathcal{T}$. For a quadrilateration, bilinear basis functions are suitable, illustrated in Fig. \ref{ExampleNetworkWithCoarseGridII}.

Let $ \mathcal{M}_D\subset \{1,\dots, m\}$ be the set of indices corresponding to fixed coarse degrees of freedom. The fixation of coarse grid nodes is determined from the set of fixed network nodes, $\mathcal{N}_D$. Consider a coarse node with basis function $\Lambda_i$, describing for instance the $x$-displacement of that node. If there exists a network node with fixed degree of freedom $j$ such that $p_j$ lies in the support of $\Lambda_i$, and $j$ also describes $x$-displacement, then the coarse degree of freedom $i$ should be fixed. This is illustrated in Fig. \ref{ExampleFixedNodes}. The fixation condition is equivalently stated as:
\begin{align}
\label{FixationCondition}
i\in \mathcal{M}_D \quad\textnormal{if} \quad\exists j\in \mathcal{N}_D  :\quad  \Lambda_i(p_j)\neq 0 \textnormal{ and } i\equiv j \pmod{d}.
\end{align}
For networks with more complex boundary geometry, the coarse grid has to be refined at the boundary to attain a proper representation of the fixed boundary conditions.

\begin{figure*}[ht!]
\centering
\subfigure[Only fixed network nodes encircled.]{\includegraphics[trim={0cm 0cm 0cm 0cm},clip,width=0.3\linewidth]{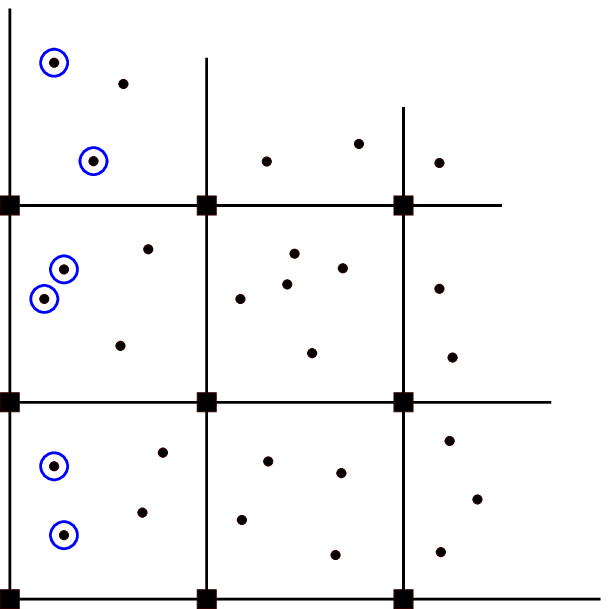} \label{ExampleFixedNodesNetwork}}
\hspace{0.5cm}
\subfigure[Both fixed network nodes and fixed coarse grid nodes encircled.]{\includegraphics[trim={0cm 0cm 0cm 0cm},clip,width=0.3\linewidth]{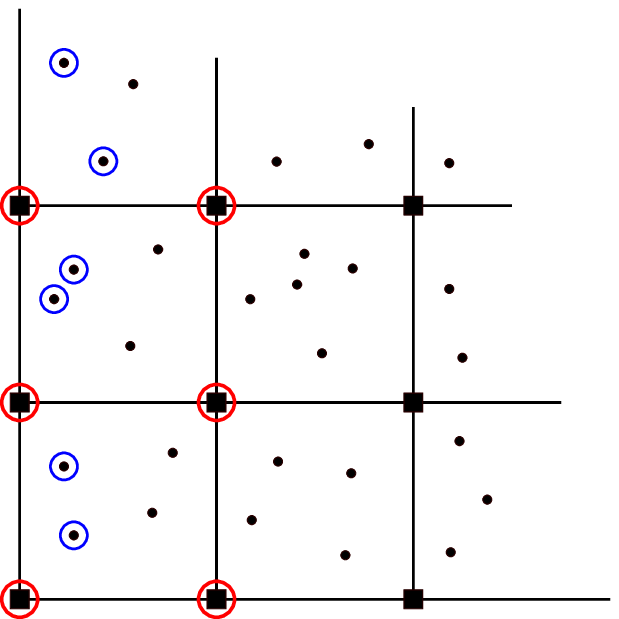} \label{ExampleFixedNodesCoarse}}
\caption{{Example illustrating the fixation of coarse network nodes for $d=1$. The coarse grid nodes are marked with squares and the network nodes with dots. Fixed nodes are encircled. To the left, fixed network nodes are marked with circles, illustrating the set $\mathcal{N}_D$. Based on $\mathcal{N}_D$ and the condition \eqref{FixationCondition}, the coarse grid nodes which should be fixed, $\mathcal{M}_D$, have been marked with larger circles in the right plot. }}
\label{ExampleFixedNodes}
\end{figure*}

Let $\mathcal{M}=\{1, \dots, m\}\setminus \mathcal{M}_D$ denote the set of nonprescribed coarse degrees of freedom. The positions of the coarse nodes, $\{P_i\}_{i=1}^m$, defined similarly as the positions of the network nodes, are a subset of $\mathbb{R}^2$, likewise as the nodes of the network. However, these two subsets do not have to be related, but as already noted, it is assumed that each coarse element contains at least one network node.

Next, two vector spaces are introduced, the coarse space $V_H$, and the detail space $W$. The coarse space is defined from the coarse basis functions in the following way. Let $\lambda_i\in \mathbb{R}^n, i=1,\dots, m$, be the interpolation of the coarse nodal basis functions to the network nodes given by
\begin{align*}
\lambda_i(j) = \begin{cases}
\Lambda( p_j), \,\,\quad \textnormal{if} \quad   i\equiv j \pmod{d},\\
0, \,\quad\quad\quad \textnormal{else}.
\end{cases}
\end{align*}
See Fig. \ref{ExampleNetworkWithCoarseGridIII} for an illustration of the interpolated vector $\lambda_i$ of the coarse nodal basis function $\Lambda_i$.

The coarse space is defined as the span of the interpolated non-fixed basis functions, that is
\begin{align*}
V_H = \textnormal{span}(\{\lambda_i\}_{i\in\mathcal{M}}),
\end{align*}
with dimension $\dim V_H = m_H := |\mathcal{M}|$. Note that $\lambda_i$ is defined for all $i=1,\dots, m$, but $\lambda_i\in V$ only if $i\in\mathcal{M}$. Let the matrix $B_H = [\{\lambda_i\}_{i\in\mathcal{M}}]\in\mathbb{R}^{n\times m_H}$ contain the basis vectors of the coarse space $V_H$ as its columns. It is   assumed that the columns are linearly independent. The matrix $B_H$ is called the prolongation matrix and acts as a map $B_H:\mathbb{R} ^{m_H}\to V_H$.

To define the detail space, a restriction matrix $C_H\in \mathbb{R}^{m_H\times n}$ is introduced acting as a map $C_H:\mathbb{R}^n\to\mathbb{R}^{m_H}$. In this work, the restriction matrix is chosen as $C_H=B_H^T$ (for examples of other choices of  restriction operator, see \cite{LOD2}). With $C_H=B_H^T$, an equivalent definition of the coarse space $V_H$ is as the range of the map $B_HC_H:V\to V_H$, that is
\begin{align*}
V_H= \{B_HC_Hv:\,\, v\in V\}.
\end{align*}
The detail space is defined as the null space of the restriction matrix:
\begin{align*}
W&= \{v\in V: \,\,C_Hv = 0\}.
\end{align*}
With $C_H=B_H^T$ it holds that $v\in W$ if the bilinear weighted average of $v$  is  zero for each interpolated bilinear basis function $\lambda_i$, i.e. $\lambda_i^T v= 0,\,\forall i\in\mathcal{M}$.

 The coarse space and the detail space constitute a splitting of $V$ such that each $v\in V$ can be uniquely decomposed as $v= v_H+w$ where $v_H\in V_H$ and $w\in W$. Before proving this fact, a lemma is stated showing the relation between the spaces $\mathbb{R}^{m_H}$, $\mathbb{R}^{n}$ and $V_H$, and the maps in-between, illustrated in Fig. \ref{MapOverview}.

\begin{figure*}[ht!]
\centering
\subfigure{\includegraphics[trim={0cm 0cm 0cm 0cm},clip,width=0.7\linewidth]{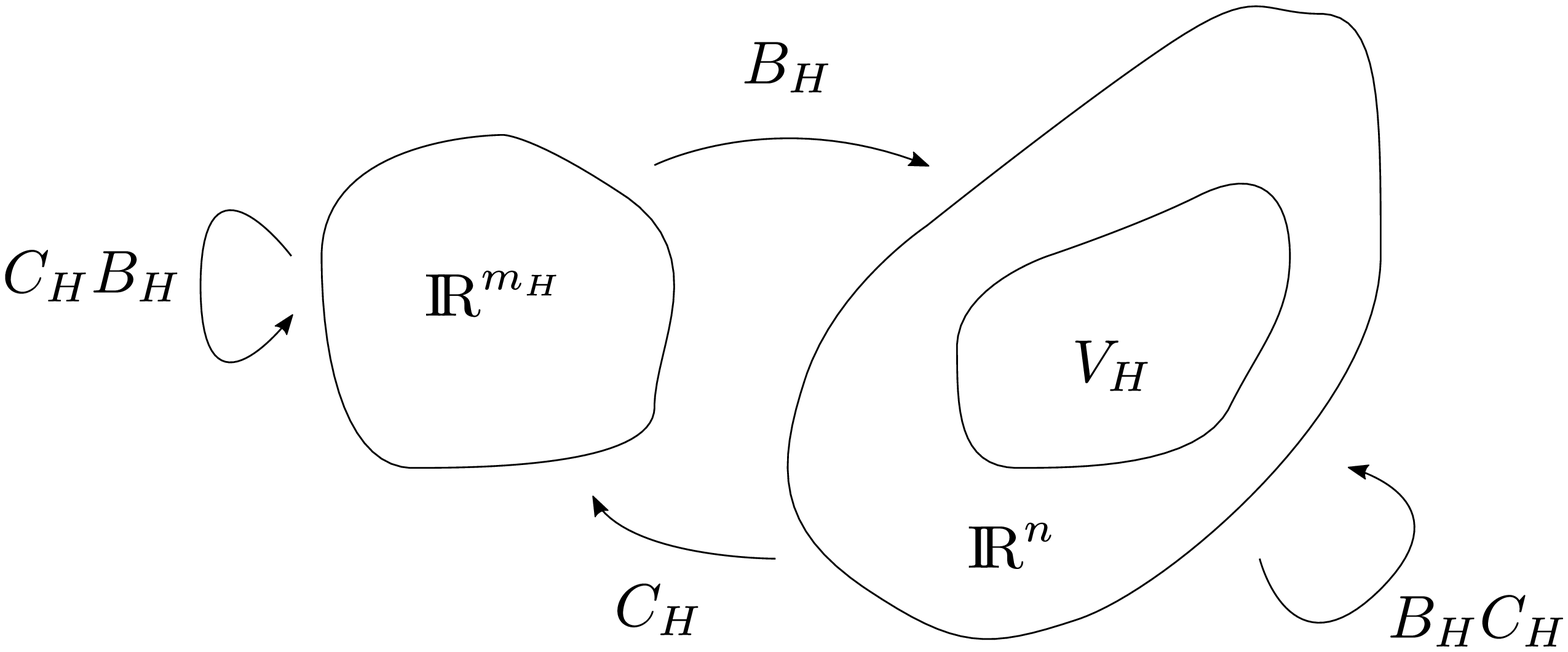}}
\caption{{A sketch of the two spaces $\mathbb{R}^{m_H}$ and $\mathbb{R}^n$, and the subspace $V_H\subset\mathbb{R}^n$. The four mappings $B_H:\mathbb{R}^{m_H}\to V_H$, $C_H:\mathbb{R}^{n}\to \mathbb{R}^{m_H}$, $C_HB_H:\mathbb{R}^{m_H}\to \mathbb{R}^{m_H}$ and $B_HC_H: \mathbb{R}^{n}\to V_H$ are also shown.}}
\label{MapOverview}
\end{figure*}

\begin{lemma}
\label{InversionLemma}
If $B_H$ has linearly independent columns and $C_H=B_H^T$, then for each $v_H\in V_H$ there exists $\bar v_H\in V_H$ such that $B_HC_H\bar v_H = v_H$. Moreover, if $v_1, v_2\in V_H$ such that $B_HC_Hv_1 = B_HC_H v_2$, then $v_1=v_2$.
\end{lemma}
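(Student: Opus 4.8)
The plan is to reduce both assertions to the invertibility of the Gram matrix $C_HB_H=B_H^TB_H$. First I would record the elementary observation that, since the columns of $B_H$ are linearly independent, the square matrix $G:=B_H^TB_H\in\mathbb{R}^{m_H\times m_H}$ is symmetric positive definite: for any $x\neq 0$ one has $x^TGx=(B_Hx)^T(B_Hx)>0$ because $B_Hx\neq 0$. Hence $G$ is invertible. I would also note that $\mathrm{range}(B_H)=V_H$ by the definition of $V_H$, so every vector of the form $B_Hy$ lies in $V_H$, and in particular $B_HC_H$ maps $\mathbb{R}^n$ into $V_H$.

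For the existence statement, given $v_H\in V_H$ I would write $v_H=B_Ha$ for some $a\in\mathbb{R}^{m_H}$ (possible since $V_H=\mathrm{range}(B_H)$) and set $\bar v_H:=B_HG^{-1}a$, which lies in $V_H$. Then $B_HC_H\bar v_H=B_H(B_H^TB_H)G^{-1}a=B_HGG^{-1}a=B_Ha=v_H$, as required.

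For the uniqueness statement, suppose $v_1,v_2\in V_H$ with $B_HC_Hv_1=B_HC_Hv_2$, and write $v_j=B_Ha_j$ with $a_j\in\mathbb{R}^{m_H}$. Then $B_HG(a_1-a_2)=B_HC_HB_H(a_1-a_2)=B_HC_Hv_1-B_HC_Hv_2=0$. Since the columns of $B_H$ are linearly independent, $B_H$ is injective as a map $\mathbb{R}^{m_H}\to\mathbb{R}^n$, so $G(a_1-a_2)=0$; and since $G$ is invertible, $a_1=a_2$, whence $v_1=v_2$.

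The only real "obstacle" is the initial recognition that the composition $C_HB_H$ equals the Gram matrix $B_H^TB_H$ and is therefore invertible; once this is in place, both claims follow by the short computations above. A minor point to be careful about is that the coefficient vector $a$ in the representation $v_H=B_Ha$ is itself uniquely determined (again by linear independence of the columns), although this uniqueness is not strictly needed for either argument. One could alternatively deduce existence from uniqueness for free, since $B_HC_H$ restricts to an injective endomorphism of the finite-dimensional space $V_H$ and hence is surjective on it, but the explicit construction via $G^{-1}$ is more transparent and will be convenient later.
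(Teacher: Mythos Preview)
Your proof is correct and follows essentially the same route as the paper's: both arguments hinge on the observation that $C_HB_H=B_H^TB_H$ is symmetric positive definite (hence invertible), and both construct $\bar v_H$ by writing $v_H=B_Ha$ and taking $\bar v_H=B_H(C_HB_H)^{-1}a$. Your uniqueness argument is likewise the same as the paper's, just phrased directly rather than by contradiction.
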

\begin{proof}
The map $B_H:\mathbb{R}^{m_H}\to V_H$ is one-to-one since $B_H$ has linearly independent columns. Moreover, the map $C_HB_H:\mathbb{R}^{m_H}\to \mathbb{R}^{m_H}$ is invertible since $C_HB_H=B_H^TB_H$ is symmetric and positive definite due to the fact that $x^TC_HB_Hx=|B_Hx|^2 \geq 0$ and $|B_Hx|=0$ implies $x=0$. Given $v_H\in V_H$, it exists $a\in\mathbb{R}^{m_H}$ such that $B_Ha=v_H$, and since $C_HB_H$ is invertible it exists $b\in \mathbb{R}^{m_H}$ such that $C_HB_Hb=a$. Therefore $v_H=B_HC_HB_Hb$ leading to $\bar v_H  = B_Hb\in V_H$.

To prove the second part, assume $v_1\neq v_2$. Then there exists $a_1,a_2\in \mathbb{R}^{m_H}$ with $a_1\neq a_2$ such that $v_1=B_Ha_1$ and $v_2=B_Ha_2$. Since $C_HB_H$ is invertible, $C_HB_Ha_1\neq C_HB_Ha_2$, contradicting the fact that $B_HC_HB_Ha_1= B_HC_HB_Ha_2$, hence $v_1=v_2$. \qed
\end{proof}

\begin{proposition}
\label{SplittingCoarse}
If $B_H$ has linearly independent columns and $C_H=B_H^T$, then $V = V_H\oplus W$ uniquely.
\end{proposition}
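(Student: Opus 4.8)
The plan is to verify the two conditions characterising an internal direct sum, namely $V_H\cap W=\{0\}$ and $V=V_H+W$; the uniqueness of the decomposition $v=v_H+w$ then follows immediately. Throughout I will lean on Lemma~\ref{InversionLemma} and on two elementary observations: that $V_H\subseteq V$ (each $\lambda_i$ with $i\in\mathcal{M}$ lies in $V$ by construction, hence so does their span), and that $B_H:\mathbb{R}^{m_H}\to V_H$ is injective because its columns are linearly independent.

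\emph{Trivial intersection.} Lemma~\ref{InversionLemma} says precisely that the map $v_H\mapsto B_HC_Hv_H$ is onto $V_H$ (first part) and one-to-one on $V_H$ (second part), hence a bijection of $V_H$ onto itself. Take $x\in V_H\cap W$. From $x\in W$ we get $C_Hx=0$, so $B_HC_Hx=0=B_HC_H0$; since $x,0\in V_H$ and the map is injective on $V_H$, we conclude $x=0$. Thus $V_H\cap W=\{0\}$.

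\emph{The sum is everything.} Let $v\in V$. Then $B_HC_Hv\in V_H$ by the equivalent description of $V_H$ as the range of $B_HC_H$ on $V$, so the surjectivity part of Lemma~\ref{InversionLemma} provides $v_H\in V_H$ with $B_HC_Hv_H=B_HC_Hv$, i.e.\ $B_HC_H(v-v_H)=0$. Applying injectivity of $B_H$ to this identity gives $C_H(v-v_H)=0$. Setting $w=v-v_H$, we have $w\in V$ (since $v\in V$ and $v_H\in V_H\subseteq V$) and $C_Hw=0$, so $w\in W$; hence $v=v_H+w\in V_H+W$.

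\emph{Conclusion and uniqueness.} Combining the two facts yields $V=V_H\oplus W$. Explicitly, if $v=v_H+w=v_H'+w'$ with $v_H,v_H'\in V_H$ and $w,w'\in W$, then $v_H-v_H'=w'-w$ lies in $V_H\cap W=\{0\}$, forcing $v_H=v_H'$ and $w=w'$. I do not expect any genuine obstacle here; the only point needing care is the bookkeeping of which space each vector inhabits — in particular that the constructed $v_H$ actually lies in $V_H$ (so that Lemma~\ref{InversionLemma} is applicable) and that $w$ inherits membership in $V$ from $v$ and $v_H$. No estimates or limiting arguments enter the proof.
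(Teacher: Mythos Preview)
Your proof is correct and follows essentially the same route as the paper: both use the surjectivity part of Lemma~\ref{InversionLemma} to produce $v_H\in V_H$ with $B_HC_Hv_H=B_HC_Hv$, the injectivity of $B_H$ to conclude $C_H(v-v_H)=0$, and the injectivity part of Lemma~\ref{InversionLemma} for uniqueness. The only cosmetic difference is that you package uniqueness via $V_H\cap W=\{0\}$ first, whereas the paper applies $B_HC_H$ directly to two decompositions.
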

\begin{proof}
For $v\in V$, let $v_H=B_HC_Hv$. Lemma \ref{InversionLemma} states the existence of $\tilde v_H\in V_H$ such that $v_H=B_HC_H\tilde v_H$. Since $B_H$ has linearly independent columns the relation $0=v_H-v_H=B_HC_Hv-B_HC_H\tilde v_H = B_H(C_Hv-C_H\tilde v_H)$ implies that $C_Hv-C_H\tilde v_H = 0$ with conclusion that $v-\tilde v_H\in W$. Therefore $v=\tilde v_H+(v-\tilde v_H)$ is a desired decomposition. To show uniqueness, consider two decompositions $v=v_{H,1} + w_1$ and $v= v_{H,2}+w_2$. Then $v_{H,1} + w_1= v_{H,2}+w_2$, and applying $B_HC_H$ on both sides gives $B_HC_Hv_{H,1}=B_HC_Hv_{H,2}$. From the last part of Lemma \ref{InversionLemma} it follows that $v_{H,1}=v_{H,2}$. \qed
\end{proof}

Using the detail space $W$, together with the connectivity matrix $K$, the multiscale space $V_\textnormal{ms}$ is defined as the $K$-orthogonal complement of $W$:
\begin{align*}
V_\textnormal{ms} = \{v\in V: w^TKv = 0,\quad\forall w\in W\}.
\end{align*}
The spaces $W$ and $V_\textnormal{ms}$ constitute another splitting of $V$ implying that every $v\in V$ can be decomposed uniquely as $v = v_\textnormal{ms}+w$ where $v_\textnormal{ms}\in V_\textnormal{ms}$ and $w\in W$.

\begin{proposition}
\label{Splitting}
Assume $B_H$ has linearly independent columns and $C_H=B_H^T$. If $K$ is symmetric and positive definite on $V$, then $V = V_\textnormal{ms}\oplus W$ uniquely.
\end{proposition}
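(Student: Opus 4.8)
The plan is to observe that the hypothesis on $K$ turns $V$ into a genuine finite-dimensional inner product space, so that $V_\textnormal{ms}$ is simply the orthogonal complement of $W$ with respect to that inner product, and then invoke the standard orthogonal decomposition theorem. Concretely, since $K$ is symmetric and positive definite on $V$, the bilinear form $\langle u,v\rangle = u^T K v$ is symmetric, bilinear, and satisfies $\langle v,v\rangle>0$ for every nonzero $v\in V$; hence it is a scalar product on the finite-dimensional space $V$, as already noted after \eqref{FormulationII}. By definition $W=\{v\in V:C_Hv=0\}$ is a linear subspace of $V$ (this is also implicit in Proposition \ref{SplittingCoarse}), and $V_\textnormal{ms}=\{v\in V:\langle w,v\rangle=0\ \forall w\in W\}$ is precisely its $\langle\cdot,\cdot\rangle$-orthogonal complement $W^{\perp}$ inside $V$.

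First I would show $W\cap V_\textnormal{ms}=\{0\}$: if $v$ lies in both spaces, then taking $w=v$ in the defining condition of $V_\textnormal{ms}$ gives $v^TKv=0$, and positive definiteness of $K$ on $V$ forces $v=0$. Next I would show $V=W+V_\textnormal{ms}$. For this I would either (i) pick a $\langle\cdot,\cdot\rangle$-orthonormal basis $\{w_1,\dots,w_k\}$ of $W$ by Gram--Schmidt, and for arbitrary $v\in V$ set $w=\sum_{j=1}^{k}\langle v,w_j\rangle\,w_j\in W$, then check directly that $v-w$ is $K$-orthogonal to each $w_j$, hence to all of $W$, so $v-w\in V_\textnormal{ms}$; or (ii) argue by dimensions, using $\dim W^{\perp}=\dim V-\dim W$ in an inner product space together with $W\cap W^{\perp}=\{0\}$ to conclude $\dim(W+V_\textnormal{ms})=\dim V$. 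Either route gives $V=W\oplus V_\textnormal{ms}$.

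Uniqueness of the decomposition then follows immediately from $W\cap V_\textnormal{ms}=\{0\}$: if $v_{\textnormal{ms},1}+w_1=v_{\textnormal{ms},2}+w_2$ with $v_{\textnormal{ms},i}\in V_\textnormal{ms}$ and $w_i\in W$, then $v_{\textnormal{ms},1}-v_{\textnormal{ms},2}=w_2-w_1\in V_\textnormal{ms}\cap W=\{0\}$, so the two decompositions coincide.

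I do not expect a serious obstacle here; the one point requiring care is that $K$ is assumed positive definite only on $V$, not on all of $\mathbb{R}^n$, so the argument must stay inside $V$ throughout and use that $V$, being a subspace of $\mathbb{R}^n$, is finite-dimensional — this is what legitimizes Gram--Schmidt and the orthogonal-complement decomposition. The hypotheses on $B_H$ and $C_H=B_H^T$ are not genuinely needed beyond ensuring that $W$ is a well-defined subspace of $V$; they are presumably retained in the statement for parallelism with Proposition \ref{SplittingCoarse}.
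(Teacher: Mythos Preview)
Your argument is correct: once $K$ furnishes an inner product on the finite-dimensional space $V$, the identity $V_\textnormal{ms}=W^\perp$ is immediate from the definition, and $V=W\oplus W^\perp$ is the standard orthogonal decomposition. Your remark that the hypotheses on $B_H$ and $C_H$ are not genuinely used is also accurate.

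The paper, however, takes a different and more constructive route. It first invokes Proposition~\ref{SplittingCoarse} to write $v=v_H+\tilde w$ with $v_H\in V_H$, $\tilde w\in W$; this is where the assumptions on $B_H$ and $C_H=B_H^T$ enter. It then solves the finite-dimensional variational problem $w^TKz=w^TKv_H$ for all $w\in W$, with $z\in W$, and sets $v_\textnormal{ms}=v_H-z$, $w=\tilde w+z$. The point of this detour is not economy but foreshadowing: the map $v_H\mapsto z$ is exactly the corrector construction $\lambda_i\mapsto\phi_i$ of \eqref{BasisModification} that produces the multiscale basis, so the paper's proof doubles as a blueprint for the algorithm. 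Your approach is cleaner and more self-contained; the paper's is tailored to motivate the subsequent basis construction and to keep the coarse space $V_H$ visible in the decomposition.
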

\begin{proof}
Consider $v\in V$. From Proposition \ref{SplittingCoarse} it is known that $v=v_H+ \tilde w$ with $v_H\in V_H$ and $\tilde w\in W$. Let $z\in W: \, w^TKz=w^TKv_H,\, \forall w\in W$, which has a unique solution since $K$ is symmetric and positive definite on $V$. Define $v_\textnormal{ms}=v_H-z$ and $w=\tilde w + z$, where the second sum is in $W$. Since $x^TKv_\textnormal{ms}=x^TKv_H - x^TKz=0, \,\forall x\in W$, it is true that $v_\textnormal{ms}\in V_\textnormal{ms}$, giving the desired decomposition as $v = v_\textnormal{ms} + w$. To prove uniqueness, consider $v=v_{\textnormal{ms}, 1} + w_ 1$ and $v=v_{\textnormal{ms}, 2} + w_ 2$. Then $0=x^TK(v-v) = x^TK(w_1 + v_{\textnormal{ms}, 1} - w_2 - v_{\textnormal{ms}, 2})= x^TK(w_1-w_2), \, \forall x\in W$, implying that $w_1=w_2$. \qed
\end{proof}

The multiscale solution, $u_\textnormal{ms}\in V_\textnormal{ms}$, to the original problem \eqref{DisplacementProblem}, is defined as the solution to the problem
\begin{align}
\label{MSProblem}
\begin{split}
\textnormal{Find }u_\textnormal{ms}\in V_\textnormal{ms}:\quad v^TKu_\textnormal{ms} = v^TF,\quad\quad\forall v\in V_\textnormal{ms}.
\end{split}
\end{align}

\begin{proposition}
\label{ExistenceOfSolution}
Let $K$ be symmetric and positive definite on $V$, and $F\in V$. Then there exists a unique solution to problem \eqref{MSProblem}.
\end{proposition}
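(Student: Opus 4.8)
The plan is to reduce \eqref{MSProblem} to an equivalent finite-dimensional linear system and show that system has a unique solution. Since $V_\textnormal{ms}\subset V$ and $K$ is symmetric and positive definite on $V$, it is in particular symmetric and positive definite on the subspace $V_\textnormal{ms}$; hence $\langle u,v\rangle = u^TKv$ restricts to a scalar product on $V_\textnormal{ms}$, as remarked in Section \ref{ProblemFormulation}. The right-hand side $v\mapsto v^TF$ is a linear functional on the finite-dimensional inner-product space $(V_\textnormal{ms},\langle\cdot,\cdot\rangle)$, so by the Riesz representation theorem there is a unique $u_\textnormal{ms}\in V_\textnormal{ms}$ with $\langle v,u_\textnormal{ms}\rangle = v^TF$ for all $v\in V_\textnormal{ms}$, which is exactly \eqref{MSProblem}.

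Concretely, one can argue via a basis. Let $B_\textnormal{ms}\in\mathbb{R}^{n\times k}$ be a matrix whose columns form a basis of $V_\textnormal{ms}$, where $k=\dim V_\textnormal{ms}$ (such a basis exists, e.g. since $V=V_\textnormal{ms}\oplus W$ by Proposition \ref{Splitting}). Writing $u_\textnormal{ms}=B_\textnormal{ms}y$ with $y\in\mathbb{R}^k$ and letting $v$ range over the columns of $B_\textnormal{ms}$, problem \eqref{MSProblem} is equivalent to the $k\times k$ system
\begin{align*}
(B_\textnormal{ms}^TKB_\textnormal{ms})\,y = B_\textnormal{ms}^TF.
\end{align*}
The matrix $A:=B_\textnormal{ms}^TKB_\textnormal{ms}$ is symmetric, and for any nonzero $y\in\mathbb{R}^k$ the vector $B_\textnormal{ms}y$ is a nonzero element of $V_\textnormal{ms}\subset V$ (columns independent), so $y^TAy=(B_\textnormal{ms}y)^TK(B_\textnormal{ms}y)>0$ by positive definiteness of $K$ on $V$. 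Thus $A$ is symmetric positive definite, hence invertible, and the system has the unique solution $y=A^{-1}B_\textnormal{ms}^TF$; equivalently $u_\textnormal{ms}=B_\textnormal{ms}A^{-1}B_\textnormal{ms}^TF$ is the unique solution of \eqref{MSProblem}, and it is independent of the choice of basis.

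There is essentially no deep obstacle here: the only point that deserves a moment's care is recognizing that positive definiteness of $K$ on the ambient space $V$ automatically yields positive definiteness of the Galerkin matrix $A$ on the subspace $V_\textnormal{ms}$, which is what makes the reduced system nonsingular. The hypothesis $F\in V$ is not actually needed for existence or uniqueness — the functional $v\mapsto v^TF$ is well defined for any $F\in\mathbb{R}^n$ — and is retained only for consistency with the formulation \eqref{DisplacementProblem}. If one prefers an abstract route, the Lax–Milgram lemma applies verbatim: boundedness of the bilinear form and of the load functional is automatic in finite dimensions, and coercivity follows from positive definiteness (with coercivity constant the smallest eigenvalue of $A$).
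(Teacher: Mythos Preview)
Your proof is correct. The paper itself does not supply a proof for this proposition; it is stated without argument, and the subsequent proposition simply refers back to it with ``Same arguments as used in Proposition \ref{ExistenceOfSolution}'' to justify existence and uniqueness of $u_f$. So the intended reasoning is exactly the standard one you have written out: $K$ restricts to a symmetric positive definite form on the subspace $V_\textnormal{ms}\subset V$, whence the Galerkin system $B_\textnormal{ms}^TKB_\textnormal{ms}\,y=B_\textnormal{ms}^TF$ (which is the paper's equation \eqref{MatrixMSProblem}) has an invertible coefficient matrix. Your observation that the hypothesis $F\in V$ plays no role in existence or uniqueness is also accurate.
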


\begin{proposition}
\label{CorrectionTheorem}
Let  $u_f\in W$ be such that $w^TKu_f=w^TF, \,\, \forall w \in W$. Then the sum $ u  =  u_\textnormal{ms} + u_f$, where $u_\textnormal{ms}$ is the solution to the multiscale problem \eqref{MSProblem}, solves the original problem \eqref{DisplacementProblem}.
\end{proposition}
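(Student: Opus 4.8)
The plan is to verify the variational identity $v^T K u = v^T F$ for every $v\in V$ directly, exploiting the $K$-orthogonal splitting $V = V_\textnormal{ms}\oplus W$ established in Proposition \ref{Splitting}. First I would observe that $u = u_\textnormal{ms} + u_f$ lies in $V$, since both $V_\textnormal{ms}$ and $W$ are subspaces of $V$. Then, given an arbitrary $v\in V$, I would invoke Proposition \ref{Splitting} to write $v = v_\textnormal{ms} + w$ with $v_\textnormal{ms}\in V_\textnormal{ms}$ and $w\in W$, and expand
\begin{align*}
v^T K u = v_\textnormal{ms}^T K u_\textnormal{ms} + v_\textnormal{ms}^T K u_f + w^T K u_\textnormal{ms} + w^T K u_f.
\end{align*}

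The crux is that the two cross terms vanish. Since $u_\textnormal{ms}\in V_\textnormal{ms}$, the defining property of $V_\textnormal{ms}$ gives $w^T K u_\textnormal{ms} = 0$ for any $w\in W$. For the other cross term, symmetry of $K$ yields $v_\textnormal{ms}^T K u_f = u_f^T K v_\textnormal{ms}$, which is again zero because $u_f\in W$ and $v_\textnormal{ms}\in V_\textnormal{ms}$. Hence $v^T K u = v_\textnormal{ms}^T K u_\textnormal{ms} + w^T K u_f$. Applying the multiscale problem \eqref{MSProblem} with test function $v_\textnormal{ms}$ and the corrector equation defining $u_f$ with test function $w$, this equals $v_\textnormal{ms}^T F + w^T F = (v_\textnormal{ms} + w)^T F = v^T F$, which is exactly the identity in \eqref{DisplacementProblem}. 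As $v\in V$ was arbitrary, $u$ solves the original problem.

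There is no substantial obstacle here; the points to be careful about are that the decomposition of $v$ is legitimate --- which is precisely the content of Proposition \ref{Splitting}, requiring $B_H$ to have linearly independent columns, $C_H = B_H^T$, and $K$ symmetric and positive definite on $V$ --- and that the symmetry of $K$ is genuinely used to eliminate the $v_\textnormal{ms}^T K u_f$ term (whereas $w^T K u_\textnormal{ms}=0$ comes straight from the definition of $V_\textnormal{ms}$). If one additionally wishes to say that $u$ is \emph{the} solution, one appeals to the standard uniqueness for \eqref{DisplacementProblem} under positive definiteness, but the statement as phrased only requires that $u$ solve it, so checking the variational equation suffices.
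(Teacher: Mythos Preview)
Your proof is correct and follows essentially the same route as the paper: decompose the test function $v$ via the splitting $V=V_\textnormal{ms}\oplus W$, expand $v^T K u$ bilinearly, eliminate the cross terms by $K$-orthogonality, and identify the remaining two terms with $v_\textnormal{ms}^T F$ and $w^T F$ via \eqref{MSProblem} and the corrector equation. If anything, your write-up is slightly more explicit than the paper's in isolating where the symmetry of $K$ is used to kill the $v_\textnormal{ms}^T K u_f$ term.
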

\begin{proof}
Same arguments as used in Proposition \ref{ExistenceOfSolution} show that there exists a unique such $u_f$. According to Proposition \ref{Splitting}, $v\in V$ can be decomposed as $v=v_\textnormal{ms} + w$, where $v_\textnormal{ms}\in V_\textnormal{ms}$ and $w\in W$. Using orthogonality, and that $u_f$ and $u_\textnormal{ms}$ are solutions to their respective problem, it can be derived that
\begin{align*}
\begin{split}
v^TKu &= (v_\textnormal{ms} + w)^TK(u_\textnormal{ms} + u_f) \\
&= v_\textnormal{ms}^TKu_\textnormal{ms} + v_\textnormal{ms}^TK u_f+ w^T K u_\textnormal{ms}+ w^T Ku_f\\
&= v_\textnormal{ms}^TF +0+ 0+ w^T F\\
&= v^TF.
\end{split}
\end{align*}
\qed
\end{proof}
To solve the multiscale problem \eqref{MSProblem}, it is convenient to construct a basis for the multiscale space $V_\textnormal{ms}$, which can be used to simplify the problem to a matrix equation.  A basis for $V_\textnormal{ms}$ is constructed using the vectors $\lambda_i$, by defining modification vectors $\phi_i\in V$, $i\in\mathcal{M}$, as solutions to the problems
\begin{align}
\label{BasisModification}
\phi_i\in W  : \quad w^TK(\lambda_i-\phi_i) = 0,\quad \forall w\in W.
\end{align}

\begin{proposition}
If $K$ is symmetric and positive definite on $V$, $B_H$ has linearly independent columns and $C_H=B_H^T$,  then the vectors $\{\lambda_i-\phi_i\}_{i\in\mathcal{M}}$ constitute a basis for $V_\textnormal{ms}$.
\end{proposition}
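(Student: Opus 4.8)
The plan is to verify three things: that each $\lambda_i-\phi_i$ lies in $V_\textnormal{ms}$, that these vectors span $V_\textnormal{ms}$, and that they are linearly independent. First I would observe that the modification problem \eqref{BasisModification} is uniquely solvable by the same argument as in Proposition \ref{ExistenceOfSolution}, applied to the restriction of $K$ to the subspace $W\subseteq V$, on which $K$ remains symmetric and positive definite; hence each $\phi_i\in W$ is well defined. Moreover, \eqref{BasisModification} states precisely that $w^TK(\lambda_i-\phi_i)=0$ for all $w\in W$, which is exactly the defining condition for membership in $V_\textnormal{ms}$, so $\lambda_i-\phi_i\in V_\textnormal{ms}$ for every $i\in\mathcal{M}$.

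For the spanning property I would take an arbitrary $v\in V_\textnormal{ms}$ and use Proposition \ref{SplittingCoarse} to write $v=v_H+w$ with $v_H=\sum_{i\in\mathcal{M}}c_i\lambda_i\in V_H$ and $w\in W$. Setting $\tilde v=\sum_{i\in\mathcal{M}}c_i(\lambda_i-\phi_i)$, which lies in $V_\textnormal{ms}$ by the previous step, one has $v-\tilde v=w+\sum_{i\in\mathcal{M}}c_i\phi_i\in W$ since each $\phi_i\in W$ and $W$ is a subspace; at the same time $v-\tilde v\in V_\textnormal{ms}$. Because $V=V_\textnormal{ms}\oplus W$ by Proposition \ref{Splitting}, the intersection $V_\textnormal{ms}\cap W$ is trivial, forcing $v=\tilde v$. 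Hence $\{\lambda_i-\phi_i\}_{i\in\mathcal{M}}$ spans $V_\textnormal{ms}$.

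For linear independence, suppose $\sum_{i\in\mathcal{M}}c_i(\lambda_i-\phi_i)=0$. Then $\sum_{i\in\mathcal{M}}c_i\lambda_i=\sum_{i\in\mathcal{M}}c_i\phi_i$, the right-hand side lying in $W$ and the left-hand side in $V_H$; since $V_H\cap W=\{0\}$ (again from the direct sum in Proposition \ref{SplittingCoarse}), both vanish, and the assumed linear independence of the columns $\{\lambda_i\}_{i\in\mathcal{M}}$ of $B_H$ yields $c_i=0$ for all $i\in\mathcal{M}$. Together with the spanning step this shows $\{\lambda_i-\phi_i\}_{i\in\mathcal{M}}$ is a basis of $V_\textnormal{ms}$; as a consistency check it also agrees with the dimension count $\dim V_\textnormal{ms}=\dim V-\dim W=\dim V_H=m_H=|\mathcal{M}|$ coming from the two direct-sum decompositions. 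I do not anticipate a genuine obstacle: everything reduces to the splittings already established, and the only point needing a moment's care is the unique solvability of \eqref{BasisModification}, which requires positive definiteness of $K$ on $W$ rather than merely on $V$, but this is immediate from $W\subseteq V$.
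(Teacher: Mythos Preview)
Your proof is correct and relies on the same underlying machinery as the paper, namely the two direct-sum decompositions of Propositions~\ref{SplittingCoarse} and~\ref{Splitting}. Two minor differences: the paper proves linear independence by applying the map $B_HC_H$ and invoking Lemma~\ref{InversionLemma} directly, whereas you invoke $V_H\cap W=\{0\}$ (itself a consequence of Proposition~\ref{SplittingCoarse}); and the paper leaves spanning to an implicit dimension count, while you argue it explicitly via $V_\textnormal{ms}\cap W=\{0\}$. Neither difference is substantive---your version is simply a bit more self-contained.
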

\begin{proof}
The problem to find $\phi_i$ such that $w^TK\phi_i=w^TK\lambda_i$, $\forall w\in W$, has a unique solution $\phi_i\in W$ since $K$ is symmetric and positive definite on $V$. By construction, it is also true that $\lambda_i-\phi_i\in V_\textnormal{ms}$. To prove linear independence, consider $\sum a_i (\lambda_i-\phi_i)=0$ and apply $B_HC_H$ to both sides. Using that $C_H\phi_i=0$ gives $B_HC_H\sum a_i\lambda_i = 0 = B_HC_H0$, and by the second part of Lemma \ref{InversionLemma} it follows that $\sum a_i\lambda_i = 0$ implying $a_i=0$. \qed
\end{proof}

Using the above constructed basis for $V_\textnormal{ms}$, $\{\lambda_i - \phi_i\}_{i\in\mathcal{M}}$, to assemble the matrix
\begin{align*}
B_\textnormal{ms}= [\{\lambda_i - \phi_i\}_{i\in\mathcal{M}}]\in\mathbb{R}^{n\times m_H},
\end{align*}
reduces the variational form of the multiscale problem \eqref{MSProblem} to the equivalent matrix problem
\begin{align}
\label{MatrixMSProblem}
B_\textnormal{ms}^TKB_\textnormal{ms}U_\textnormal{ms} = B_\textnormal{ms}^TF,
\end{align}
where $U_\textnormal{ms}\in \mathbb{R}^{m_H}$ and $u_\textnormal{ms} = B_\textnormal{ms}U_\textnormal{ms}$.

At this point, a multiscale space with low dimension compared to the solution space $V$ has been constructed as was desired in the problem formulation. Moreover, it has been shown that the problem can be solved through a matrix equation after constructing a basis for the multiscale space. However, the problems of solving the modified basis functions have the same size as the original problem. It turns out that this can be circumvented, utilizing that the modifications $\phi_i$ decay exponentially, implying  that the problems can be localized. This is presented in the following section.

\subsection{\textbf{Localization}}
\label{Localization}
The described method  requires systems to be solved which are as large as $K$. However, as will be demonstrated by numerical examples in Sec.~\ref{NumericalExamples}, the modifications $\phi_i$ decay fast away from its coarse node. Therefore the problems \eqref{BasisModification}, of calculating $\phi_i$, can be localized with preserved convergence rates.  The localization is accomplished by solving each problem \eqref{BasisModification} on a restricted domain, called patch.

As in FEM, it is natural to assemble the stiffness matrix elementwise. In this work, it is suitable to assemble the connectivity matrix $K$ over each coarse element $E$, such that $K=\sum_{E\in\mathcal{T}} K_E$. The local element connectivity matrices $K_E:V\to V$ are assembled for each element $E\in\mathcal{T}$ by only considering edges in each element. See Fig. \ref{ElementAssemble} for an illustration. For unstructured networks, edges may intersect the elements. Such a situation is resolved by temporarily dividing the edges at intersection points. Using the decomposition of the connectivity matrix into local element matrices, the modifications $\phi_i$ can analogously be assembled as $ \sum_{E\in\mathcal{T}} \phi_i^E$, where $\phi_i^E$ is the solutions to the problem
\begin{align*}
\textnormal{Find }\phi_i^E\in W: \quad w^TK\phi_i^E=w^TK_E\lambda_i,\quad \forall w\in W.
\end{align*}
\begin{figure*}[ht!]
\centering
\subfigure{\includegraphics[trim={2.9cm 1.4cm 2.5cm 1.1cm},clip,width=0.4\linewidth]{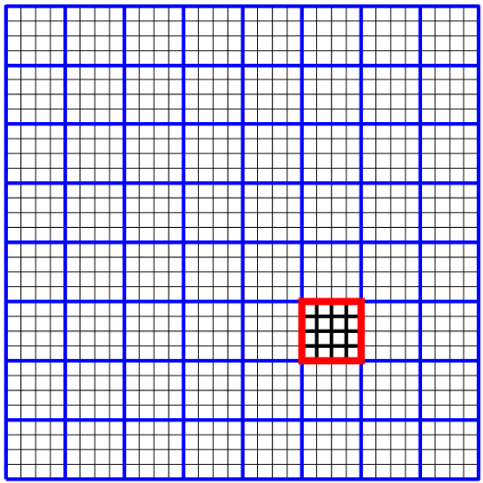}}
\caption{{Element $E$ is marked with a square and the edges which are included in the assemble of $K_E$ is marked with thick lines.}}
\label{ElementAssemble}
\end{figure*}

\begin{proposition}
The sum of the elementwise modifications, $ \sum_{E\in\mathcal{T}} \phi_i^E$, solves the original problem \eqref{BasisModification}.
\end{proposition}
\begin{proof}
It follows that
\begin{align*}
w^TK\sum_{E\in\mathcal{T}} \phi_i^E = \sum_{E\in\mathcal{T}} w^TK \phi_i^E = \sum_{E\in\mathcal{T}} w^TK_E \lambda_i =  w^T\sum_{E\in\mathcal{T}}K_E \lambda_i = w^TK \lambda_i.
\end{align*}
\qed
\end{proof}
With elementwise assembly, it is convenient to use patches centred at each element. For an element $E\in\mathcal{T}$, let its patch be denoted $\omega_E\subset\mathbb{R}^2$. One suitable choice of patch geometry is a circle with center coinciding with the element center, as illustrated in Fig. \ref{PatchExam1a}. Another choice is to use a fixed number of layers of coarse elements surrounding the considered element $E$, as depicted in Fig. \ref{PatchExam1b}. Let the patch size be described by the parameter $\rho$ such that  $\rho H$ is the radius of the patch, where $H$ denotes the coarse element size. In the two examples shown in Fig. \ref{PatchExam1}, the value of $\rho$ corresponds to 1.5. Let $\mathcal{N}_E \subset\mathcal{N}$, denote the degrees of freedom of network nodes that are in the patch $\omega_E$. Similarly, let $\mathcal{M}_E \subset\mathcal{M}$, denote the degrees of freedom of coarse nodes that are in the patch of element $E$.

\begin{figure*}[ht!]
\centering
\subfigure[Circular patch with origin at element center.]{\includegraphics[trim={2.8cm 1cm 2cm 0.8cm},clip,width=0.4\linewidth]{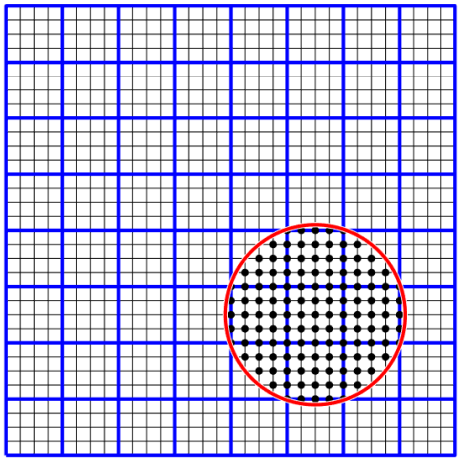}\label{PatchExam1a}}
\hspace{1cm}
\subfigure[Patch with one layer of elements surrounding the element.]{\includegraphics[trim={2.8cm 1cm 2cm 0.8cm},clip,width=0.4\linewidth]{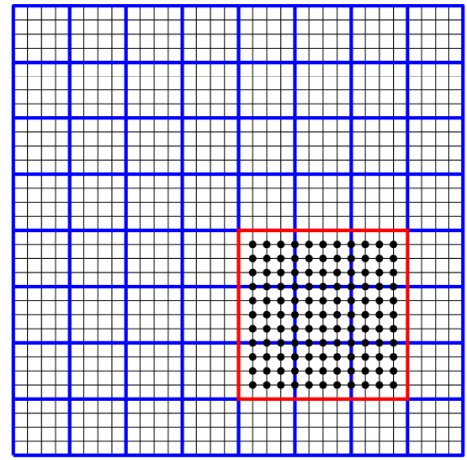}\label{PatchExam1b}}
\caption{{Two square networks with different types of patch geometries.}}
\label{PatchExam1}
\end{figure*}

For each element $E$, its localized subspace of the detail space is defined according to
\begin{align*}
\tilde W_E= \{w\in W: \quad w(i) = 0, \quad \forall i\notin \mathcal{N}_E\}.
\end{align*}
The localized modification $\tilde\phi_i$ is attained by solving the problems
\begin{align*}
\textnormal{Find }\tilde \phi_i^E\in \tilde W_E: \quad w^TK \tilde \phi_i^E=w^TK_E\lambda_i,\quad \forall w\in \tilde W_E,
\end{align*}
for each element $E$, and taking the sum $ \tilde \phi_i = \sum_{E\in\mathcal{T}} \tilde\phi_i^E$. The resulting localized multiscale space is denoted $\tilde V_\textnormal{ms}$, and is defined as
\begin{align*}
\tilde V_\textnormal{ms} =\textnormal{span}(\{\lambda_i-\tilde\phi_i\}_{i\in\mathcal{M}}).
\end{align*}

\subsection{\textbf{Algebraic formulation}}
\label{AlgebraicFormulation}
In this section the algebraic formulation of the numerical multiscale method is presented. The multiscale method consists of two main steps. First,  the basis for the multiscale space $V^\textnormal{ms}$ is constructed by calculating each $\phi_i$ from \eqref{BasisModification}. Secondly, the attained modified basis is used to solve the global multiscale problem \eqref{MSProblem}. In this section, elementwise assembling and localization, described in Sec.~\ref{Localization}, is employed.

First, a useful matrix notation is introduced. Consider a matrix $A\in\mathbb{R}^{a\times b}$. Let $\mathcal{A}\subset\{1,2,\dots,a\}$ and  $\mathcal{B}\subset\{1,2,\dots,b\}$ be subsets of the matrix row and column indices respectively. A new matrix $A(\mathcal{A}, \mathcal{B})\in\mathbb{R}^{|\mathcal{A}|\times|\mathcal{B}|}$ is extracted from $A$ by only considering rows corresponding to  indices in $\mathcal{A}$ and columns corresponding to indices in $\mathcal{B}$, that is $A(\mathcal{A}, \mathcal{B})= (a_{ij})_{(i,j)\in \mathcal{A}\times\mathcal{B}}$ .

Using the introduced matrix notation, the following matrices and vectors are defined for $E\in\mathcal{T}$ and $i\in\mathcal{M}$:
\begin{align*}
\begin{split}
K^E &= K(\mathcal{N}_E, \mathcal{N}_E),\\
C_H^E &= C_H(\mathcal{M}_E, \mathcal{N}_E),\\
r^E_i &= K_E(\mathcal{N}_E, :) \lambda_i.
\end{split}
\end{align*}

The multiscale method can now be formulated as solving several matrix systems. For each element $E\in\mathcal{T}$, and each coarse degree of freedom $i\in\mathcal{M}$, the following system is solved
\begin{align*}
\begin{bmatrix}
K^E & {C_H^E}^T\\
C_H^E & 0
        \end{bmatrix}
\begin{bmatrix}
\tilde\varphi_i^E \\
\eta_i^E
\end{bmatrix} =
\begin{bmatrix}
r_ i^E \\
0
\end{bmatrix} .
\end{align*}
The correction vectors $\tilde\phi_i^E$ is attained from $\tilde\varphi_i^E$ as
\begin{align*}
\begin{split}
\tilde\phi_i^E(\mathcal{N}_E) &= \tilde\varphi_i^E, \\
\tilde\phi_i^E(\mathcal{N}_E^C ) &= 0,
\end{split}
\end{align*}
where $\mathcal{N}_E^C=\{1,\dots,n\}\setminus\mathcal{N}_E$. The full modifications are thereafter calculated according to
\begin{align*}
\tilde\phi_i = \sum_{E\in\mathcal{T}} \tilde\phi_i^E,\quad i\in\mathcal{M},
\end{align*}
and used to assemble the modified basis matrix
\begin{align*}
\tilde B_M=  [\{\lambda_i - \tilde\phi_i\}_{i\in\mathcal{M}}].
\end{align*}
Finally, the following localized version of the global problem \eqref{MatrixMSProblem} is solved:
\begin{align}
\label{MatrixMSProblemLocal}
\tilde B_M^TK\tilde B_M \tilde U_\textnormal{ms} = \tilde B_M^TF,
\end{align}
where the fine multiscale solution is calculated as $\tilde u_\textnormal{ms} = \tilde B_M\tilde U_\textnormal{ms}$.

Summarized, the algebraic formulation of the numerical multiscale method is:
\begin{empheq}[box=\fbox]{align}
\begin{split}
&\textnormal{1. For each } i\in \mathcal{M}  \textnormal{ and } E\in\mathcal{T} \textnormal{ solve} \\
&\quad\quad\quad\quad\begin{bmatrix}
K^E & {C_H^E}^T\\
C_H^E & 0
        \end{bmatrix}
\begin{bmatrix}
\tilde\varphi_i^E \\
\eta_i^E
\end{bmatrix} =
\begin{bmatrix}
r_ i^E \\
0
\end{bmatrix}.\quad\quad\quad \\
&\textnormal{2. Assemble   } \\
&\quad\quad\quad\quad\tilde\phi_i = \sum_{E\in\mathcal{T}} \tilde\phi_i^E,\quad i\in\mathcal{M}.\\
&\textnormal{3. Solve    }\\
&\quad\quad\quad\quad\tilde B_M^TK\tilde B_M \tilde U_\textnormal{ms} = \tilde B_M^TF.\\
&\textnormal{4. Calculate    }\\
&\quad\quad\quad\quad\tilde u_\textnormal{ms} = \tilde B_M\tilde U_\textnormal{ms}.
\end{split}
\end{empheq}

\subsection{\textbf{Non-zero fixed boundary conditions}}
\label{NonzeroFixation}
Consider a displacement problem where $F=0$ and the set of prescribed degrees of freedom, $\mathcal{N}_D$, includes degrees with non-zero  displacement. Let
\begin{align*}
V_D = \{u\in V: u_i=g_i, \quad  i\in \mathcal{N}_D\},
\end{align*}
 where $g_i$ are the prescribed displacement of degree of freedom $i\in\mathcal{N}_D$. Let still $V = \{u\in V: u_i=0, \quad  i\in \mathcal{N}_D\}$. The displacement problem is
\begin{align}
\label{dispProb}
\textnormal{Find }  u\in V_D: \quad v^T Ku=0, \quad \forall v\in V.
\end{align}
Assume $u=u_{0}+g_H$ where $u_{0}\in V$ and $g_H$ for simplicity is a linear combination of $\lambda_i, \,i=1,\dots, m$. The problem \eqref{dispProb} is then equivalent to
\begin{align}
\label{dispProbReformulated}
 \textnormal{Find } u_{0}\in V: \quad v^TKu_{0} = - v^TKg_H, \quad\forall v\in V.
\end{align}
The problem is in this way transformed back to the previous formulation and can be solved with the same method.

Proposition \eqref{CorrectionTheorem} states that the exact solution of the displacement problem is $u=u_\textnormal{ms} + u_f$. For the case with non-zero prescribed displacements and $g_H\in \textnormal{span}(\{\lambda_i\}_{i=1}^m)$, which was described above, the correction vector $u_f$ turns out to be a linear combination of the vectors $\phi_i, \,i= 1,\dots, m$, as the following derivation shows. Consider the non-homogeneous displacement problem \eqref{dispProbReformulated}. The correction is attained from
\begin{align}
\label{CorrectionDispProb}
\textnormal{Find } u_f \in W: \quad w^TKu_f = -w^TKg_H, \quad \forall w\in W,
\end{align}
where $g_H$ was assumed to be a linear combination of $\lambda_i$, i.e.:
\begin{align}
\label{LinearComb}
g_H=\sum_{i=1}^m\alpha_i\lambda_i.
\end{align}
The modification vectors $\phi_i, \,i=1,\dots, m$, is attained from the problems
\begin{align}
\label{ModificationProblemCorrection}
\textnormal{Find } \phi_i \in W: \quad w^TK\phi_i = w^TK\lambda_i, \quad \forall w\in W.
\end{align}
Note that the problems are solved for all $i=1,\dots, m$, compared to before, when only $i\in\mathcal{M}$ were considered. This is necessary to construct the correction $u_f$ as will  be seen next.

Inserting \eqref{LinearComb} into \eqref{CorrectionDispProb} and using \eqref{ModificationProblemCorrection} gives
\begin{align*}
w^TKu_f &= -w^TKg_H=-\sum_{i=1}^m \alpha_i w^TK\lambda_i=-\sum_{i=1}^m \alpha_i w^TK\phi_i=w^TK(-\sum_{i=1}^m \alpha_i \phi_i),
\end{align*}
implying that the correction is the sum
\begin{align*}
u_f = -\sum_{i=1}^m\alpha_i\phi_i.
\end{align*}
For general fixed displacements $g_H$, not necessarily in $V_H$, see the work \cite{HM2014}.

\section{Error analysis}
\label{ErrorAnalysis}
This paper concerns  a quite general network model described by a connectivity matrix $K$ and a right hand side load $F$. In this section, some error bounds are shown. Because of the generality of $K$,  assumptions are needed. It is  assumed that the coarse grid is quasi-uniform finite element mesh with mesh parameter $H\approx m^{-1/2}$. The following two norms on the space $V$ are introduced:
\begin{align*}
\vertiii{v}&:=(v^TKv)^{1/2},\\
\|v\|&:=(v^Tv)^{1/2}.
\end{align*}
Since $K$ is symmetric and positive definite on $V$ the smallest eigenvalue of $K$, denoted $\nu$, fulfils
\begin{align*}
\frac{\vertiii{v}^2}{\|v\|^2}\geq \nu.
\end{align*}
It is assumed that $\nu$ is bounded from below by a constant independent of $n$. For the finite element method posed on a quasi-uniform mesh these definitions and assumptions correspond to the energy norm, the $L^2$-norm and a Poincar\'{e} inequality. A bilinear weighted interpolant  $\pi_H :V\rightarrow V_H$ is defined as  $\pi_H v=\sum_{i\in\mathcal{M}}(\lambda_i^T v)\lambda_i$. Assume the following  error bound:
\begin{align}
\label{AssumedErrorBound}
\|v-\pi_H v\|\leq C H\vertiii{F},
\end{align}
which is expected to hold for slowly varying $F$.

The main source of error in the proposed method is the localization of the multiscale basis functions to patches. To isolate this contribution the vector $\pi_H F \in V_H$ is introduced. Consider the modified model problem: find $\hat{u}\in V$ such that
\begin{align*}
v^T K \hat{u}=v^T \pi_H F,\,\,\,\forall v\in V.
\end{align*}
Given the assumptions, it is  noted that
\begin{align*}
\vertiii{u-\hat{u}}^2 =(u-\hat u)^T (F-\pi_H F)\leq \|u-\hat u\| \|F-\pi_H F\|\leq C\vertiii{ u-\hat{u}}\|F-\pi_H F\|,
\end{align*}
which together with \eqref{AssumedErrorBound} implies
\begin{equation}
\label{eq:dataerror}
\vertiii{ u-\hat{u}}\leq C\|F-\pi_H F\| \leq CH\vertiii{F}.
\end{equation}
This error is viewed as acceptable and  without loss of generality it is assumed that $F\in V_H$.

First it is  shown that the multiscale solution $U_\text{ms}$ is equal to $u$ if $F\in V_H$.
\begin{proposition}\label{FHequivalence}
Let $u$ and $u_\textnormal{ms}$ be defined according to
\begin{align*}
u\in V: \, v^TKu&=v^TF, \,\,\,\forall v\in V,\\
u_\textnormal{ms}\in V_\textnormal{ms}: \, v^TKu_\textnormal{ms}&=v^TF, \,\,\,\forall v\in V_\textnormal{ms}.
\end{align*}
 If  $F\in V_H$, then it holds that $u=u_\textnormal{ms}$.
\end{proposition}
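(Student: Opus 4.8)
The plan is to reduce the statement to showing that the fine‑scale correction $u_f$ from Proposition~\ref{CorrectionTheorem} vanishes when $F\in V_H$. By that proposition, the exact solution decomposes as $u=u_\textnormal{ms}+u_f$, where $u_\textnormal{ms}$ solves the multiscale problem and $u_f\in W$ is the unique solution of $w^TKu_f=w^TF$ for all $w\in W$. Hence it suffices to prove $u_f=0$.

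The key observation is that, with the choice $C_H=B_H^T$, the coarse space $V_H$ and the detail space $W$ are orthogonal in the Euclidean inner product, not merely complementary: if $w\in W$ then $C_Hw=0$ means $\lambda_i^Tw=0$ for every $i\in\mathcal{M}$, so for any $F\in V_H$, writing $F=\sum_{i\in\mathcal{M}}c_i\lambda_i$, one gets $w^TF=\sum_{i\in\mathcal{M}}c_i\,\lambda_i^Tw=0$. In other words, $F\perp W$ whenever $F\in V_H$.

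Now test the defining relation for $u_f$ with $w=u_f\in W$. This yields $\vertiii{u_f}^2=u_f^TKu_f=u_f^TF$, and the previous step gives $u_f^TF=0$ since $u_f\in W$ and $F\in V_H$. Therefore $\vertiii{u_f}=0$, and because $K$ is symmetric positive definite on $V$ and $u_f\in W\subset V$, this forces $u_f=0$. Consequently $u=u_\textnormal{ms}$, which is the claim.

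I do not expect a genuine obstacle here: the argument is a one‑line energy estimate once the right orthogonality is identified. The only point requiring care is making explicit that $C_H=B_H^T$ is what makes $F\in V_H$ orthogonal to $W$ in the $\ell^2$ sense — this is what kills the term $u_f^TF$ — together with invoking positive definiteness of $K$ on $V$ (already assumed) to pass from $\vertiii{u_f}=0$ to $u_f=0$. One could alternatively phrase the proof directly from the Galerkin orthogonality $v^TK(u-u_\textnormal{ms})=0$ for $v\in V_\textnormal{ms}$ combined with $u-u_\textnormal{ms}\in W$, but routing through Proposition~\ref{CorrectionTheorem} is the most economical.
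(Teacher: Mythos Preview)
Your proof is correct and rests on the same key observation as the paper's: with $C_H=B_H^T$, the coarse space $V_H$ is $\ell^2$-orthogonal to $W$, so $F\in V_H$ forces $w^TF=0$ for every $w\in W$. The only difference is packaging: the paper shows directly that $u_\textnormal{ms}$ satisfies the full equation on $V$ (it already holds on $V_\textnormal{ms}$, and on $W$ both $v_f^TKu_\textnormal{ms}$ and $v_f^TF$ vanish by $K$-orthogonality and the $\ell^2$-orthogonality, respectively), then invokes uniqueness; you instead route through Proposition~\ref{CorrectionTheorem} and kill $u_f$ with an energy estimate. Both arguments are equivalent and equally short.
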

\begin{proof}
Since $F\in V_H$, there exists an $\bar{F}\in V$ such that $B_H C_H\bar F= F$. To show $v^TK u_\textnormal{ms}=v^T F, \, \forall v\in V$, it is noted that it  holds for all $v\in V_{\text{ms}}$ from the definition of $u_\textnormal{ms}$. Due to the splitting $V=V_\textnormal{ms}\oplus W$ it is enough to show that it also is true for all test functions $v_f\in W$. For any $v_f\in W$ it holds that
\begin{align*}
v_f^T F-v_f^T K u_\textnormal{ms}=v_f^T B_H C_H \bar F=(\bar F^TC_H^T B_H^T v_f)^T=0,
\end{align*} 
since $v_f^T K u_\textnormal{ms}=0$ by orthogonality and $B_H^T v_f=C_H v_f=0$ by the definition of the space $W$. Therefore $u_\textnormal{ms}$ solves the same equation as $u$, and due to  uniqueness $u_\textnormal{ms}=u$. \qed
\end{proof}

The error committed by localization is difficult to study without stronger assumptions on the connectivity matrix $K$. It has however been analysed for several concrete cases, for instance when $K$ is the stiffness matrix arising from discretizing the Poisson equation \cite{LOD1} and the elasticity equations \cite{HePe2016} with the finite element method. For these cases it is true that
\begin{align}
\label{eq:decay}
\max_{w\in V_\textnormal{ms}}\min_{v\in \tilde V_\textnormal{ms}}\frac{\vertiii{w-v}}{\vertiii{w}}\leq Ce^{-c\rho},
\end{align}
where $C$ and $c$ are constants independent of $H$. In Sec.~\ref{NumericalExamples} it is shown, through numerical validation, that this relation is true for more complicated network models. The theoretical analysis of this result for the general network model proposed in Sec.~\ref{NetworkModel} is complicated and postponed to future work. Assuming this result gives the following error bound.

\begin{theorem}
Assuming equation \eqref{eq:decay}, the following error bound is true for any load vector $F\in V$:
\begin{align*}
\vertiii{u-\tilde u_\textnormal{ms}}\leq C\|F-\pi_H F\|+Ce^{-c\rho}\|F\|.
\end{align*}
\end{theorem}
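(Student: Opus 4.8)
The plan is to exploit that, by its very construction via \eqref{MatrixMSProblemLocal}, $\tilde u_\textnormal{ms}\in\tilde V_\textnormal{ms}$ is the Galerkin approximation of $u$ in $\tilde V_\textnormal{ms}$: it satisfies $v^TK\tilde u_\textnormal{ms}=v^TF$ for all $v\in\tilde V_\textnormal{ms}$, while $u$ satisfies $v^TKu=v^TF$ for all $v\in V\supseteq\tilde V_\textnormal{ms}$, so $u-\tilde u_\textnormal{ms}$ is $K$-orthogonal to $\tilde V_\textnormal{ms}$. Since $K$ is symmetric and positive definite on $V$, this makes $\tilde u_\textnormal{ms}$ the $\vertiii{\cdot}$-best approximation of $u$ in $\tilde V_\textnormal{ms}$, i.e. $\vertiii{u-\tilde u_\textnormal{ms}}=\min_{v\in\tilde V_\textnormal{ms}}\vertiii{u-v}$. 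Inserting the ideal (non-localized) multiscale solution $u_\textnormal{ms}\in V_\textnormal{ms}$ of \eqref{MSProblem} and the triangle inequality then give
\[
\vertiii{u-\tilde u_\textnormal{ms}}\le\vertiii{u-u_\textnormal{ms}}+\min_{v\in\tilde V_\textnormal{ms}}\vertiii{u_\textnormal{ms}-v},
\]
and the proof reduces to bounding the two terms on the right.

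For the second term I would invoke the decay hypothesis \eqref{eq:decay} with $w=u_\textnormal{ms}\in V_\textnormal{ms}$, obtaining $\min_{v\in\tilde V_\textnormal{ms}}\vertiii{u_\textnormal{ms}-v}\le Ce^{-c\rho}\vertiii{u_\textnormal{ms}}$, and then control the energy of $u_\textnormal{ms}$ by testing its own defining equation with itself: $\vertiii{u_\textnormal{ms}}^2=u_\textnormal{ms}^TF\le\|u_\textnormal{ms}\|\,\|F\|\le\nu^{-1/2}\vertiii{u_\textnormal{ms}}\,\|F\|$, hence $\vertiii{u_\textnormal{ms}}\le\nu^{-1/2}\|F\|\le C\|F\|$ by the assumed lower bound on $\nu$. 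This yields the exponential contribution $Ce^{-c\rho}\|F\|$.

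The first term $\vertiii{u-u_\textnormal{ms}}$ is the main part, and this is where the data reduction and Proposition \ref{FHequivalence} enter; the only genuine subtlety I anticipate is the bookkeeping of which problems carry the load $F$ and which carry $\pi_HF$, since Proposition \ref{FHequivalence} cannot be applied to $u$ directly when $F\notin V_H$. Alongside $\hat u\in V$ with $v^TK\hat u=v^T\pi_HF$ for all $v\in V$, I would introduce the multiscale solution $\hat u_\textnormal{ms}\in V_\textnormal{ms}$ with $v^TK\hat u_\textnormal{ms}=v^T\pi_HF$ for all $v\in V_\textnormal{ms}$ (well posed by Proposition \ref{ExistenceOfSolution}, as $\pi_HF\in V_H\subset V$). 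Because $\pi_HF\in V_H$, Proposition \ref{FHequivalence} gives $\hat u_\textnormal{ms}=\hat u$. Moreover $\hat u_\textnormal{ms}-u_\textnormal{ms}\in V_\textnormal{ms}$ satisfies $v^TK(\hat u_\textnormal{ms}-u_\textnormal{ms})=v^T(\pi_HF-F)$ for all $v\in V_\textnormal{ms}$, so testing with $v=\hat u_\textnormal{ms}-u_\textnormal{ms}$ and using $\|\cdot\|\le\nu^{-1/2}\vertiii{\cdot}$ gives $\vertiii{\hat u_\textnormal{ms}-u_\textnormal{ms}}\le\nu^{-1/2}\|F-\pi_HF\|$. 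Combining with the data-error bound \eqref{eq:dataerror}, $\vertiii{u-\hat u}\le C\|F-\pi_HF\|$, and the triangle inequality,
\[
\vertiii{u-u_\textnormal{ms}}\le\vertiii{u-\hat u}+\vertiii{\hat u_\textnormal{ms}-u_\textnormal{ms}}\le C\|F-\pi_HF\|.
\]

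Putting the two estimates together gives $\vertiii{u-\tilde u_\textnormal{ms}}\le C\|F-\pi_HF\|+Ce^{-c\rho}\|F\|$, which is the claim. In this argument every step beyond the bookkeeping described above is a routine application of coercivity, the triangle inequality, and the hypotheses \eqref{eq:decay} and \eqref{eq:dataerror}; the genuinely hard estimate, namely the exponential decay \eqref{eq:decay} itself, is assumed here and its verification for the general network model is deferred, as stated in the text.
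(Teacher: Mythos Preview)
Your proof is correct and uses the same ingredients as the paper --- Galerkin best approximation in $\tilde V_\textnormal{ms}$, Proposition~\ref{FHequivalence} applied to the auxiliary problem with load $\pi_H F$, the decay hypothesis \eqref{eq:decay}, and the coercivity bound $\|\cdot\|\le\nu^{-1/2}\vertiii{\cdot}$ --- but organizes them a little differently. The paper first treats the special case $F\in V_H$ and then, for general $F$, telescopes through the three auxiliary quantities $\hat u,\hat u_\textnormal{ms},\hat{\tilde u}_\textnormal{ms}$ (solutions in $V$, $V_\textnormal{ms}$, $\tilde V_\textnormal{ms}$ for the load $\pi_HF$), bounding $\vertiii{u-\hat u}$, $\vertiii{\hat u_\textnormal{ms}-\hat{\tilde u}_\textnormal{ms}}$ and $\vertiii{\hat{\tilde u}_\textnormal{ms}-\tilde u_\textnormal{ms}}$ separately. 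You instead split immediately via best approximation as $\vertiii{u-\tilde u_\textnormal{ms}}\le\vertiii{u-u_\textnormal{ms}}+\min_{v\in\tilde V_\textnormal{ms}}\vertiii{u_\textnormal{ms}-v}$ and only introduce $\hat u=\hat u_\textnormal{ms}$ to handle the first term. This is marginally cleaner: it applies \eqref{eq:decay} once, directly to $u_\textnormal{ms}$, and never needs the localized object $\hat{\tilde u}_\textnormal{ms}$. The paper's version, on the other hand, makes the structure ``data error $+$ localization error'' slightly more explicit by isolating the case $F\in V_H$ first. Either way the argument and the resulting bound are the same.
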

\begin{proof}
First consider the case $F\in V_H$. The vectors $ u_\textnormal{ms}\in V_\textnormal{ms}$ and $\tilde u_\textnormal{ms} \in \tilde V_\textnormal{ms}$ solve
\begin{align*}
v^TKu_\textnormal{ms}&=v^T F,\quad\forall v\in V_\textnormal{ms},\\
v^TK\tilde u_\textnormal{ms}&=v^T F,\quad\forall v\in \tilde{V}_\textnormal{ms}.
\end{align*}
Since $\tilde V_\textnormal{ms}\subset V$, Galerkin orthogonality gives 
\begin{align*}
\vertiii{u-\tilde u_\textnormal{ms}}\leq \min_{v\in \tilde V_\textnormal{ms}} \vertiii{u-v},
\end{align*}
Moreover, since $F\in V_H$ it holds that $u=u_\textnormal{ms}$. Using this together with the assumption \eqref{eq:decay} with $w=u=u_\textnormal{ms}$ leads to
\begin{align*}
\vertiii{u_\textnormal{ms}-\tilde u_\textnormal{ms}}\leq \min_{v\in \tilde{V}_{\text{ms}}}\vertiii{u_\textnormal{ms}-v}\leq Ce^{-c\rho}\vertiii{u_\textnormal{ms}}.
\end{align*}
This together with the fact that 
\begin{align*}
\vertiii{u_\textnormal{ms}}^2 = |u_\textnormal{ms}^TK u_\textnormal{ms}| = |u_\textnormal{ms}^TF|\leq  \|u_\textnormal{ms}\| \|F\| \leq C\|F\| \vertiii{u_\textnormal{ms}}
\end{align*}
 gives
\begin{align}
\label{ineqTheoremSub}
\vertiii{u_\textnormal{ms}-\tilde u_\textnormal{ms}}\leq Ce^{-c\rho}\|F\|.
\end{align}
For the general case $F\in V$,  let $\hat u, \hat u_\textnormal{ms}$ and $\hat{\tilde u}_\textnormal{ms}$ denote the different solutions to the problems with load vector $\pi_H F$. Using $\hat u = \hat u_\textnormal{ms}$ and  the inequalities \eqref{eq:dataerror} and \eqref{ineqTheoremSub} finally result in
\begin{align*}
\begin{split}
\vertiii{u-\tilde u_\textnormal{ms}} &= \vertiii{u-\hat u + \hat u_\textnormal{ms} - \hat{\tilde u}_\textnormal{ms} +  \hat{\tilde u}_\textnormal{ms} - \tilde u_\textnormal{ms}} \\
&\leq  \vertiii{u-\hat u} + \vertiii{\hat u_\textnormal{ms} - \hat{\tilde u}_\textnormal{ms}} +  \vertiii{\hat{\tilde u}_\textnormal{ms} - \tilde u_\textnormal{ms}}\\
&\leq C\|F-\pi_H F\|+Ce^{-c\rho}\|\pi_H F\| + C\|F-\pi_H F\| \\
&\leq C\|F-\pi_H F\|+ Ce^{-c\rho}\left(\|F-\pi_H F\| + \| F\|\right)\\
&\leq C\|F-\pi_H F\|+Ce^{-c\rho}\|F\|.
\end{split}
\end{align*}
\qed
\end{proof}

\section{Network model for paper-based materials}
\label{NetworkModel}
In this section, a two-dimensional elasticity network model is presented, which can be used to model fiber networks in paper-based materials. An elasticity network consists of nodes and edges. When the nodes are displaced, internal forces act to restore the displacements.  These forces act at two types of elements, either on edges or on edge pairs (two edges connected at a joint node). Three types of internal forces are included in this model, one type is related to edges, and two types are related to edge pairs. The model is two-dimensional, static and assumes small deformations.

The network mechanics are governed by force equilibrium equations  assembled at each node. The general form of the equation for each node $i$ reads
\begin{align*}
F^\textnormal{Internal}_i  + F^\textnormal{External}_i = 0,
\end{align*}
where $ F^\textnormal{External}_i\in\mathbb{R}^2$ are all externally applied forces. The internal force $F^\textnormal{Internal}_i\in\mathbb{R}^2$ is, as  mentioned, a sum of three contributions:
\begin{align*}
F^\textnormal{Internal}_i  = F^\textnormal{I}_i  + F^\textnormal{II}_i +  F^\textnormal{III}_i .
\end{align*}
The first force contribution is related to the edges of the network and acts to compensate for changes in length of the edges.  The second force contribution acts on edge pairs  to compensate changes in the angle between the edges of each edge pair.
The third force contribution is included to model the Poisson effect. It acts on edge pairs by introducing a resistance to changes in the total length of the two edges of the pair. In the following sections, the three forces are described. Preparatory some nomenclature is introduced.

Let the network consist of $N$ nodes. Let $(i, j)$ denote the edge connecting node $i$ to $j$ and let $\mathcal{E}$ denote the set of all edges. Note that $(i, j) = (j, i)$. Edge pairs are denoted by $(i, j, l)$ where $j$ is the central node. Denote by $\mathcal{P}$ the set of all edge pairs. Note that $(i,j,l)=(l,j,i)$. Each node $i$ has two degrees of freedom, the $x$-directed displacement and the $y$-directed displacement, contained in the vector $\delta_i\in\mathbb{R}^2$.

All force equilibrium equations can be assembled into a system of the form $-Ku + F = 0$ where $K\in\mathbb{R}^{n\times n}$ is called the elasticity matrix, $u\in\mathbb{R}^n$ is the node displacements, and $F\in\mathbb{R}^n$ is the external forces. The elasticity matrix $K$ is attained by summation of  matrices assembled at edges and edge pairs. The node displacement vector $u$ is arranged according to
\begin{align*}
\begin{bmatrix}
u(2i-1) \\
u(2i)
\end{bmatrix} =
\delta_i , \quad\quad 1 \leq i \leq N,
\end{align*}
and the elasticity matrix is assembled such that the force equilibrium equation for node $i$ is at row $2i-1$ for the $x$-component, and at row $2i$ for the $y$-component.

Let the length of edge $(i, j)$ be denoted $L_{ij}$ and assume that the edge has a width $w_{ij}$. All edges is assumed to have a uniform thickness $z$ in the direction into the plane. The direction vector, $d_{ij}^a$, of an edge $(i,j)$, with respect to node $a\in\{i,j\}$, is defined as
\begin{align*}
d_{ij}^a = \frac{p_b - p_a}{|p_b-p_a|},\quad b \in \{i,j\}, \,\, b\neq a.
\end{align*}
The length change of edge $(i,j)$ is denoted $\Delta L_{ij}$ and given by
\begin{align*}
\Delta L_{ij} = \left(\delta_j-\delta_i\right)\cdot d_{ij}^i.
\end{align*}
In Fig. \ref{NodesAndEdges} some of the introduced notation is depicted. The length change $\Delta L_{ij}$ of an edge, which is not exact but approximated by taking the dot product of the displacement difference onto the direction vector, is illustrated.

\begin{figure*}[ht!]
\centering
\subfigure[]{\includegraphics[trim={0cm 0cm 0cm 0cm},clip,width=0.35\linewidth]{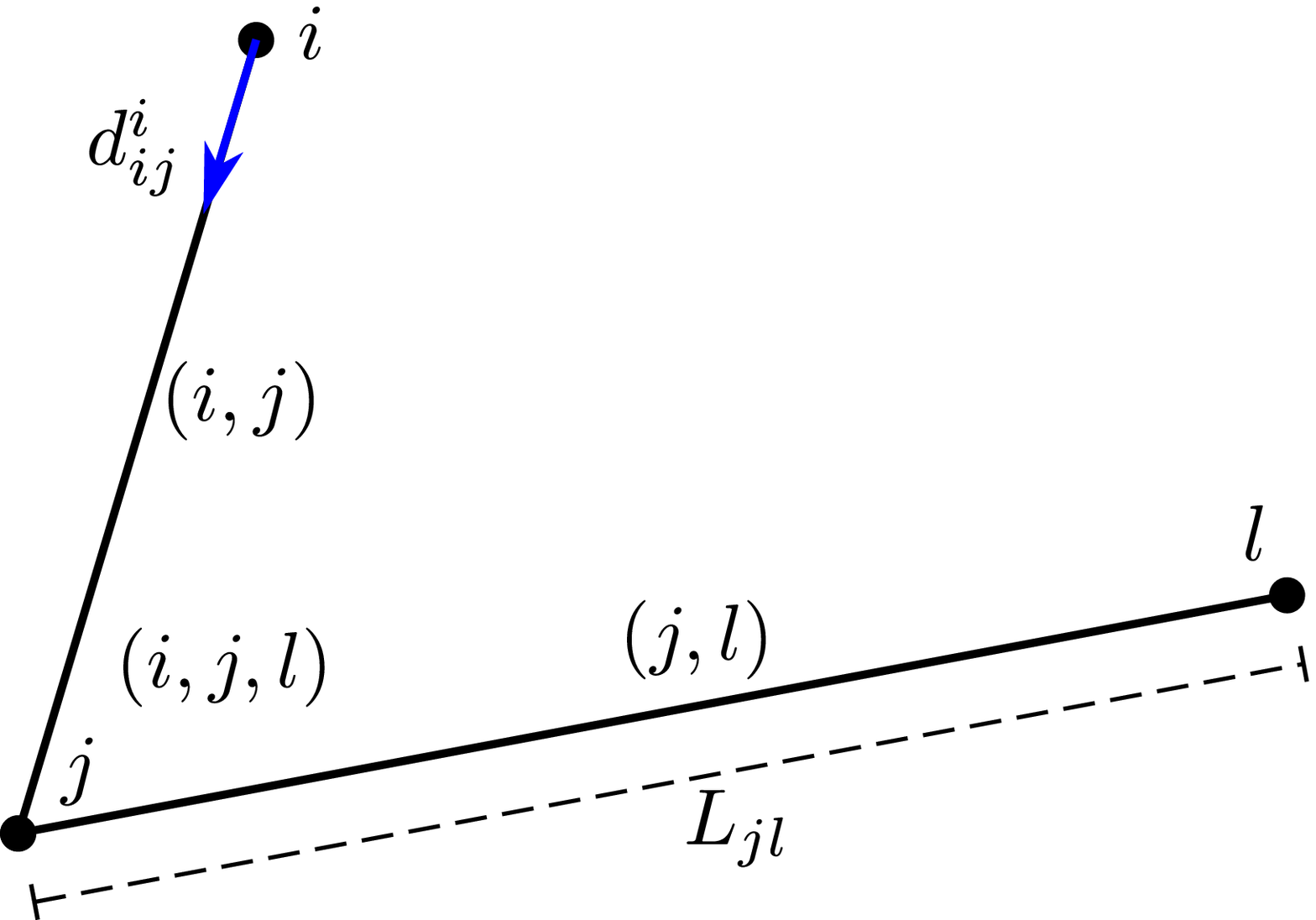}\label{EdgePairPic}}
\hspace{0.3cm}
\subfigure[]{\includegraphics[trim={0cm 0cm 0cm 0cm},clip,width=0.6\linewidth]{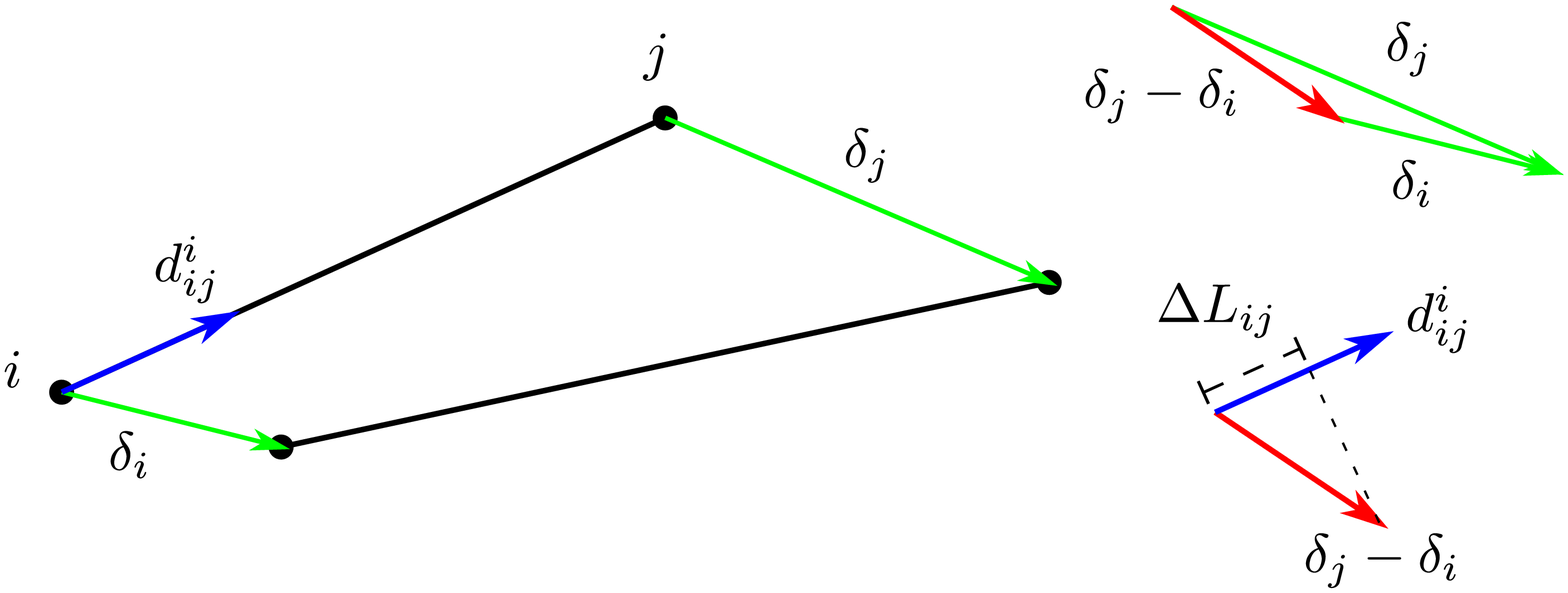}\label{EdgeDisplacement}}
\caption{{Sketches showing the notation used for the network nodes, edges and edge pairs. To the right it is illustrated how the approximate length change $\Delta L_{ij}$ of an edge $(i,j)$ is calculated.}}
\label{NodesAndEdges}
\end{figure*}

\subsection{\textbf{Extension of edges}}
The first force contribution acts at edges due to their internal resistance to length change. When the nodes of an edge are displaced so that the projection of the difference of the node displacements onto the initial edge direction is nonzero, anti-parallel forces arise at the nodes of the edge to restore the length. The tendency of an edge to restore its length is described by the elastic modulus $k_{ij}$.

Consider an edge $(i,j)$ as shown in Fig. \ref{EdgeDisplacement}. When the nodes are displaced, the edge $(i,j)$ will give rise to two forces $F_a^\textnormal{I}(i,j)$, $a\in\{i,j\}$,  acting on node $a$ according to
\begin{align*}
F_a^\textnormal{I}(i,j) = k_{ij} \frac{w_{ij}z}{L_{ij}}\Delta L_{ij}d_{ij}^a, \quad a\in \{i,j\}.
\end{align*}

\subsection{\textbf{Angular deviations of edge pairs}}
The second force contribution acts at edge pairs from their internal tendency to resist change of the angle between their two edges.  When a change in angle occurs, two torques arise at the connecting node acting on one edge each to restore the change. By transforming these torques to force couples the effect can be converted into the force equilibrium equations.

Consider an edge pair $(i, j, l)$ as depicted in Fig. \ref{EdgePairPic}. When the nodes are displaced, an angular change ${\Delta\theta}_{ijl}$ occurs, giving rise to two torques
\begin{align*}
\begin{split}
\tau_i &= \kappa_{ijl} V_{ijl} {\Delta\theta}_{ijl} \hat z,\\
\tau_l &= - \tau_i,
\end{split}
\end{align*}
acting on edge $(i,j)$ and $(j,l)$ respectively, at the position of node $j$. The angular change is a sum of two contributions according to
\begin{align*}
{\Delta\theta}_{ijl} = \delta\theta_{ji} + \delta\theta_{jl}.
\end{align*}
Each term, $\delta_{ji}$ and $\delta_{jl}$, is the angle deviation of respective edge from its initial orientation, as can be seen in Fig. \ref{AngleChange}.
By using the assumption $\alpha \approx \tan\alpha$, the angles $\delta\theta_{ja}, a\in\{i,l\}$ can be calculated according to
\begin{align*}
\delta\theta_{ja} \approx \tan\theta_{ja} = \frac{(\delta_{a}-\delta_j)\cdot n^j_{ja}}{L_{ja}}, \quad a\in\{i,l\},
\end{align*}
where the edge normals are calculated according to
\begin{align*}
\begin{split}
n_{ji}^j &= d^j_{ji} \times \hat z, \\
n_{jl}^j &= -d^j_{jl} \times \hat z.
\end{split}
\end{align*}
Transforming the torques to force couples gives the resulting three forces $F^\textnormal{II}_a(i, j, l), a\in\{i,j,l\}$ from edge pair $(i,j,l)$, acting on node $a$, according to
\begin{align*}
\begin{split}
F^\textnormal{II}_a(i,j, l) &=  -\frac{\kappa_{ijl} V_{ijl} {\Delta\theta}_{ijl}}{L_{aj}}n^j_{ja}, \quad a\in\{i,l\},\\
F_j^\textnormal{II}(i,j,l) &= - F^\textnormal{II}_i(i,j, l) - F^\textnormal{II}_l (i,j, l).
\end{split}
\end{align*}
These forces are illustrated in Fig. \ref{AngleChange}.
\begin{figure*}[ht!]
\centering
\subfigure{\includegraphics[trim={0cm 0cm 0cm 0cm},clip,width=0.5\linewidth]{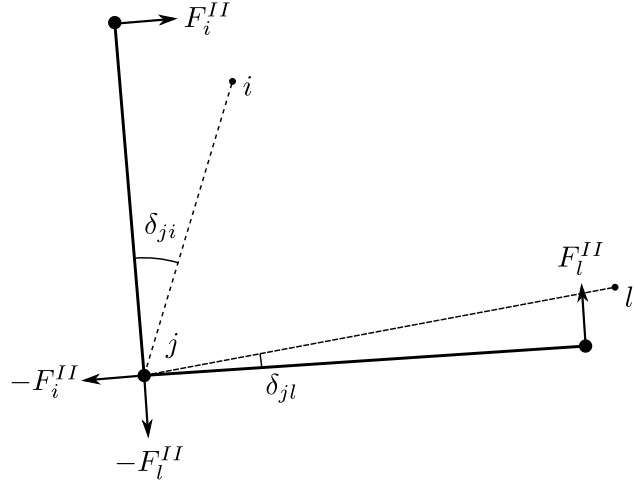}}
\caption{{The initial position of the edge pair $(i,j,l)$ is shown with dashed lines. After displacement, forces act at the nodes of the edge pair to restore the angular change between the edges. The angle deviation of each edge is denoted $\delta_{ji}$ and $\delta_{jl}$, respectively.}}
\label{AngleChange}
\end{figure*}

\subsection{\textbf{Poisson effect of edge pairs}}
The third force contribution results in an effect similar to the Poisson effect and acts at edge pairs. The idea is to add forces that work to keep the total length of the two edges of the pair constant. Hence, when one edge changes length, two kind of forces occur, on one hand forces acting to restore the length of the specific edge, on the other hand forces acting to change the length of the other edge in the pair.

Consider an edge pair $(i,j,l)$, as shown in Fig. \ref{EdgePairPic}. The forces acting at the outer nodes $a\in\{i,l\}$ will be
\begin{align*}
F^\textnormal{III}_a(i,j,l) = -\eta_{ijl} \frac{w_{aj}z}{L_{aj}}\left(\Delta L_{aj}+ \gamma_{ijl}\frac{w_{bj}}{2}\frac{\Delta L_{bj}}{L_{bj}}|n^j_{aj}\cdot d^j_{bj}|\right)d^j_{aj}, \quad a,b\in\{i,l\}, \,\, b\neq a,
\end{align*}
and at the central node
\begin{align*}
F_j^\textnormal{III}(i,j,l) = - F^\textnormal{III}_i(i,j,l)  - F^\textnormal{III}_l(i,j,l).
\end{align*}

\subsection{\textbf {Assembling of elasticity matrix}}
\label{AssemblingMatrix}
The governing equation $-Ku+F=0$ contains all node equilibrium equations as a matrix system. Since each edge and each edge pair leads to separate force contributions, the total elasticity matrix $K$ can be assembled from separate element matrices for each of the three force contributions. Let $K^\textnormal{I}_{ij}, K^\textnormal{II}_{ijl}, K^\textnormal{III}_{ijl}\in\mathbb{R}^{n\times n}$ denote the matrices assembled from the first, second and third force contribution respectively, at different elements (edges $(i,j)$ or edge pairs $(i,j,l)$). These matrices are sparse and the only nonzero elements are defined by the relations
\begin{align*}
\begin{split}
K^\textnormal{I}_{ij}(\{2a-1, 2a\}, \{1, \dots, n\}) u&= F_a^\textnormal{I}(i,j), \quad\quad\,  a\in\{i,j\}, \\
K^\textnormal{II}_{ijl}(\{2a-1, 2a\}, \{1, \dots, n\}) u &= F_a^\textnormal{II}(i,j, l), \quad\,  a\in\{i,j, l\},\\
K^\textnormal{III}_{ijl}(\{2a-1, 2a\}, \{1, \dots, n\}) u &= F_a^\textnormal{III}(i,j, l), \quad  a\in\{i,j, l\}.
\end{split}
\end{align*}
The elasticity matrix $K$ is assembled according to
\begin{align*}
K = &-\sum_{(i,j)\in \mathcal{E}} K^\textnormal{I}_{ij}  - \sum_{(i,j,l)\in\mathcal{P}}\left( K^\textnormal{II}_{ijl} + K^\textnormal{III}_{ijl}\right) .
\end{align*}
The matrix $K$ is symmetric and semi-positive definite. With proper fixation of nodes, the matrix will be positive definite on the restricted solution space. Moreover, for a regular network with uniform coefficients $k$, $\kappa$, $\eta$ and $\gamma$, the presented model is equivalent to the finite difference discretization of the two-dimensional linear elasticity equations.

\section{Numerical results}
\label{NumericalExamples}
In this section, two network problems are solved using the proposed multiscale method and the fiber network model. Both problems are similar, with different boundary conditions and load vectors. Consider a unit square network with nodes and edges in a regular grid pattern, as shown in Fig. \ref{NumExamA}. Let the number of nodes be $(r+1)^2$. In the examples $r=2^7=128$ will be used.  Let $l_i$ and $\mu_i$ represent the standard Lam\'e parameters, with one value for each node $i$. Set the parameters of the network model to
\begin{align*}
\begin{split}
k_{ij} = \frac{2\bar\mu_{ij} + \bar l_{ij}} {5c}, \quad
\kappa_{ij} =  \frac{\bar\mu_{ij} }{4c^2}, \quad
\eta_{ijl} = \frac{2\mu_{j} + l_{j}} {5c},\quad
\gamma_{ijl} =  \frac{2 l_j} {4\eta_{ijl}c^2},\quad
c = \frac{1}{2},
\end{split}
\end{align*}
where $\bar \mu_{ij} = \frac{\mu_i + \mu_j}{2}$ and $\bar l_{ij} = \frac{l_i + l_j}{2}$ are the mean values of the two nodes of edge $(i,j)$.
Using the above parameters, the elasticity matrix $K$ is assembled. Let $h=1/r$ denote the length of each network edge.

\begin{figure*}[ht!]
\centering
\subfigure[Regular square network.]{\includegraphics[trim={3.1cm 1.1cm 2.5cm 0.8cm},clip,width=0.31\linewidth]{NetworkExample2.eps}\label{NumExamA}}
\hspace{0.2cm}
\subfigure[Network with coarse scale grid representation.]{\includegraphics[trim={3.1cm 1.1cm 2.5cm 0.8cm},clip,width=0.31\linewidth]{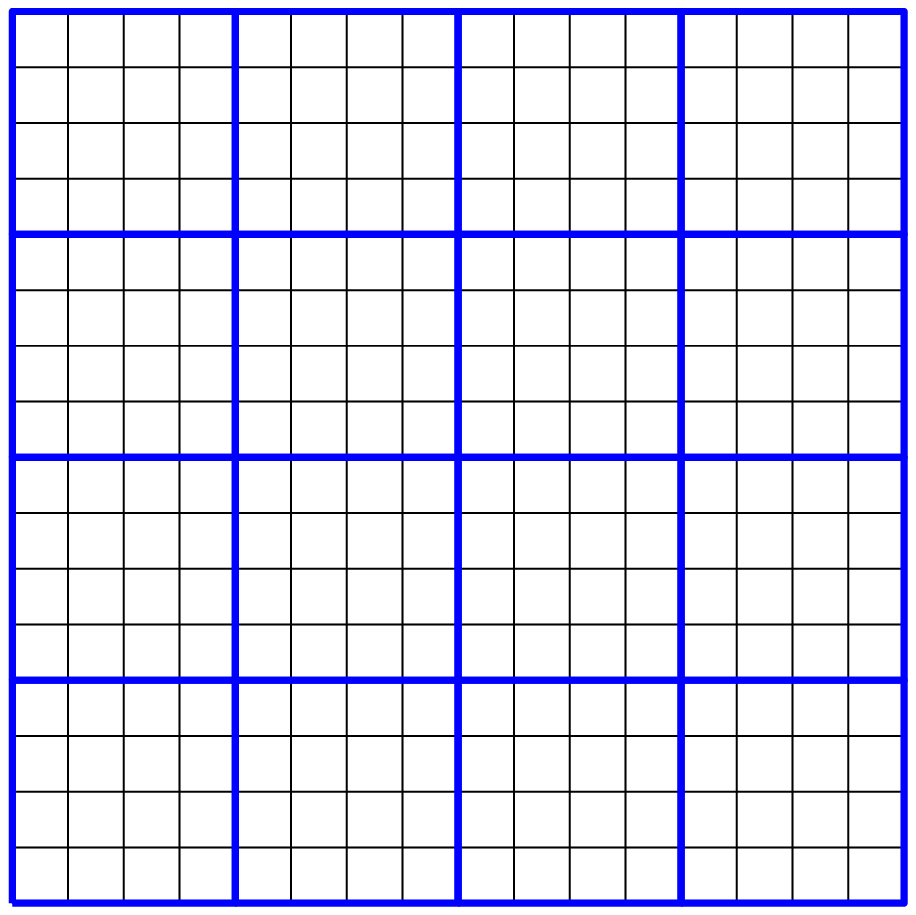}\label{NumExamB}}
\hspace{0.2cm}
\subfigure[Coarse scale basis function.]{\includegraphics[trim={4.0cm 1.5cm 3.5cm 2.7cm},clip,width=0.32\linewidth]{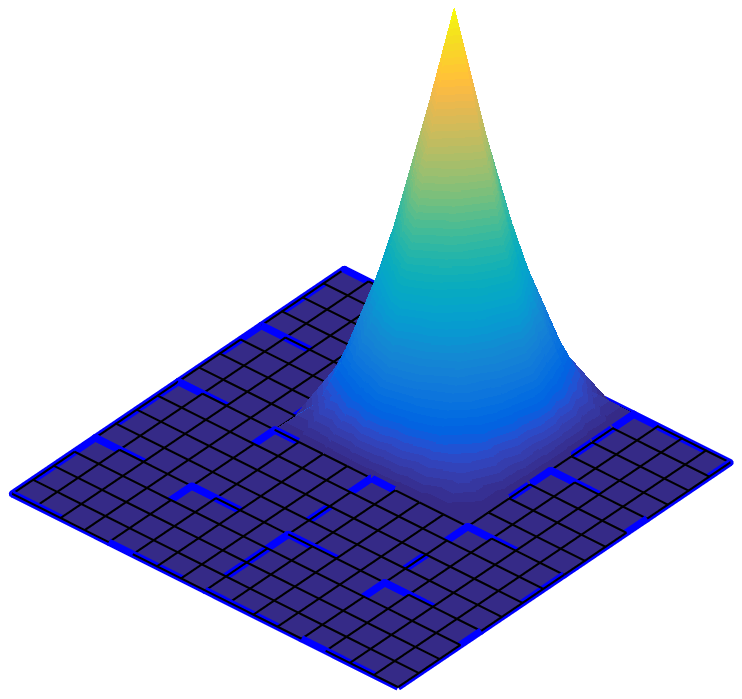}\label{NumExamC}}
\caption{{Example of a regular square network with $r=16$ and $R=4$. }}
\label{NumExam}
\end{figure*}

The first network problem, called the fixed boundary problem, is
\begin{align}
\label{FirstProblem}
\bar K_1 u = \bar F,
\end{align}
where $u(i)=0$ for all network nodes on the unit square boundary, and $F(i) =  \frac{1}{h^2}$. As mentioned earlier, $\bar K_1$ and $\bar F$ correspond to the modification of $K$ and $F$ by explicitly including the boundary conditions.

The second problem, called the displaced boundary problem, is
\begin{align}
\label{SecondProblem}
\bar K_2 u = 0
\end{align}
where $u(i)=0$ for all network nodes with $x=0$, and $u(i) = 0.1$ for all $x$-directed degrees of freedom with $x=1$.

To solve the two problems using the proposed numerical multiscale method, a coarse FEM grid is introduced. The grid is similar to the network but with $(R+1)^2$ nodes. The basis functions $\Lambda_i$ are chosen as classic bilinear. See Fig. \ref{NumExamB} for an illustration of the network and coarse FEM grid. Let $H= 1/R$ be the width of the coarse elements. With the described network geometry, the fixed boundary conditions correspond to fixation of coarse nodes at the boundary.

Each problem is solved with three different setups, first a basic setup with $l_i=\mu_i=1$. Secondly, by using a realization of random coefficients $l_i $ and $\mu_i$ sampled in $[0.1, 10]$ with uniform distribution. Thirdly, each node $i$ is displaced $[\delta x_i,\, \delta y_i]$ where $\delta x_i$ and $\delta y_i$ is randomly sampled in $[-0.4h, 0.4h]$ with uniform distribution. For nodes with initial coordinate $x=0$ or $x=1$, it is enforced that $\delta x_i = 0$, and similarly for nodes with $y=0$ or $y=1$, $\delta y_i=0$. In Fig. \ref{FineNetworkRandom}, a network with such random structure is shown for $r=32$.

\begin{figure*}[ht!]
\centering
\subfigure{\includegraphics[trim={3.1cm 1.1cm 2.6cm 0.8cm},clip,width=0.4\linewidth]{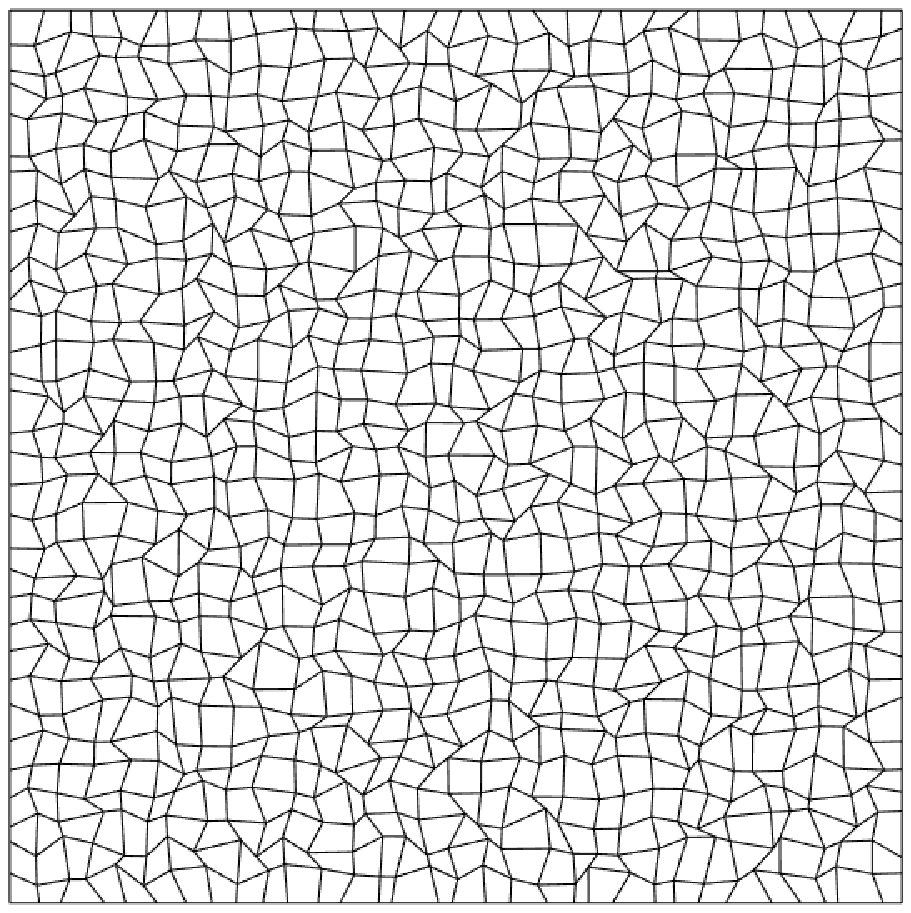}}
\caption{{Example of a unit square network with $r=32$ and randomly perturbed nodes. }}
\label{FineNetworkRandom}
\end{figure*}

The benefits of the multiscale method will be demonstrated by utilizing localization as described in Section \ref{Localization}, by introducing patches that the local modification problems are solved over. Which  degrees of freedom that are included in each patch set $\mathcal{N}_E$, are chosen based on the radius $\rho H$. A degree of freedom $i\in\mathcal{N}$ will be in $\mathcal{N}_E$, if $|p_i-c_E|\leq \rho H$, where $c_E\in\mathbb{R}^2$ is the center of  element $E$. Hence patches will have the circular form as was illustrated in Fig. \ref{PatchExam1a}. The rapid decay of the modified basis $\{\lambda_i-\phi_i\}_{i\in\mathcal{M}}$ is demonstrated by solving the modification $\tilde\phi_i$ with different  $\rho$ and computing the relative error $\vertiii{\phi_i-\tilde\phi_i}/\vertiii{\phi_i}$. The degree of freedom $i$ is chosen as one of the central nodes but the trend is similar for all nodes. The resulting errors for the first  problem \eqref{FirstProblem} are seen in Fig. \ref{PatchExam}.

\begin{figure*}[ht!]
\centering
\subfigure{\includegraphics[trim={0.0cm 0.1cm 1.5cm 0.6cm},clip,width=0.45\linewidth]{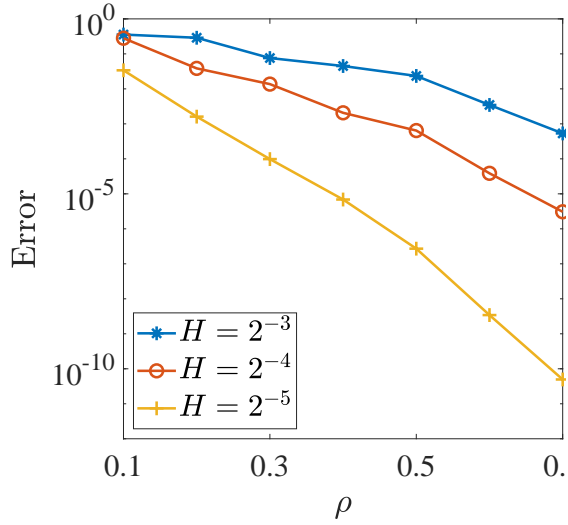}}
\caption{{Relative error $\vertiii{\phi_i-\tilde\phi_i}/\vertiii{\phi_i}$ for a localized modified basis function $\tilde \phi_i$ for a central node for the first problem \eqref{FirstProblem}.}}
\label{PatchExam}
\end{figure*}

Next the two problems \eqref{FirstProblem} and \eqref{SecondProblem} are solved using $r= 2^7 = 128$ for different coarse grids with $R = 2, 4, 8, 16, 32$. The problems are solved using localization with patch radius $\rho H =C H \log_2(H^{-1})=k/2^k$ where $k=1,2,3,4,5$. For the first problem \eqref{FirstProblem}, the constant is chosen to $C=1$, and for the second problem \eqref{SecondProblem} it is $C=1.5$. The second problem, with non-zero fixed displacement, is solved with correction as described in Section \ref{NonzeroFixation}. The resulting error curves for the two problem types and their three different setups are shown in Fig. \ref{Fixed} and Fig. \ref{Displaced}. In some setups the errors can be compared with the so-called FEM-error, corresponding to solving the multiscale problem \eqref{MatrixMSProblemLocal} with non-modified basis $B_H$ instead of the modified multiscale basis $\tilde B_\textnormal{ms}$. It can be seen that this FEM-solution behaves poorly for the setup with random coefficients.

\begin{figure*}[ht!]
\centering
\subfigure[$K$-error basic.]{\includegraphics[trim={0cm 0cm 0cm 0cm},clip,width=0.32\linewidth]{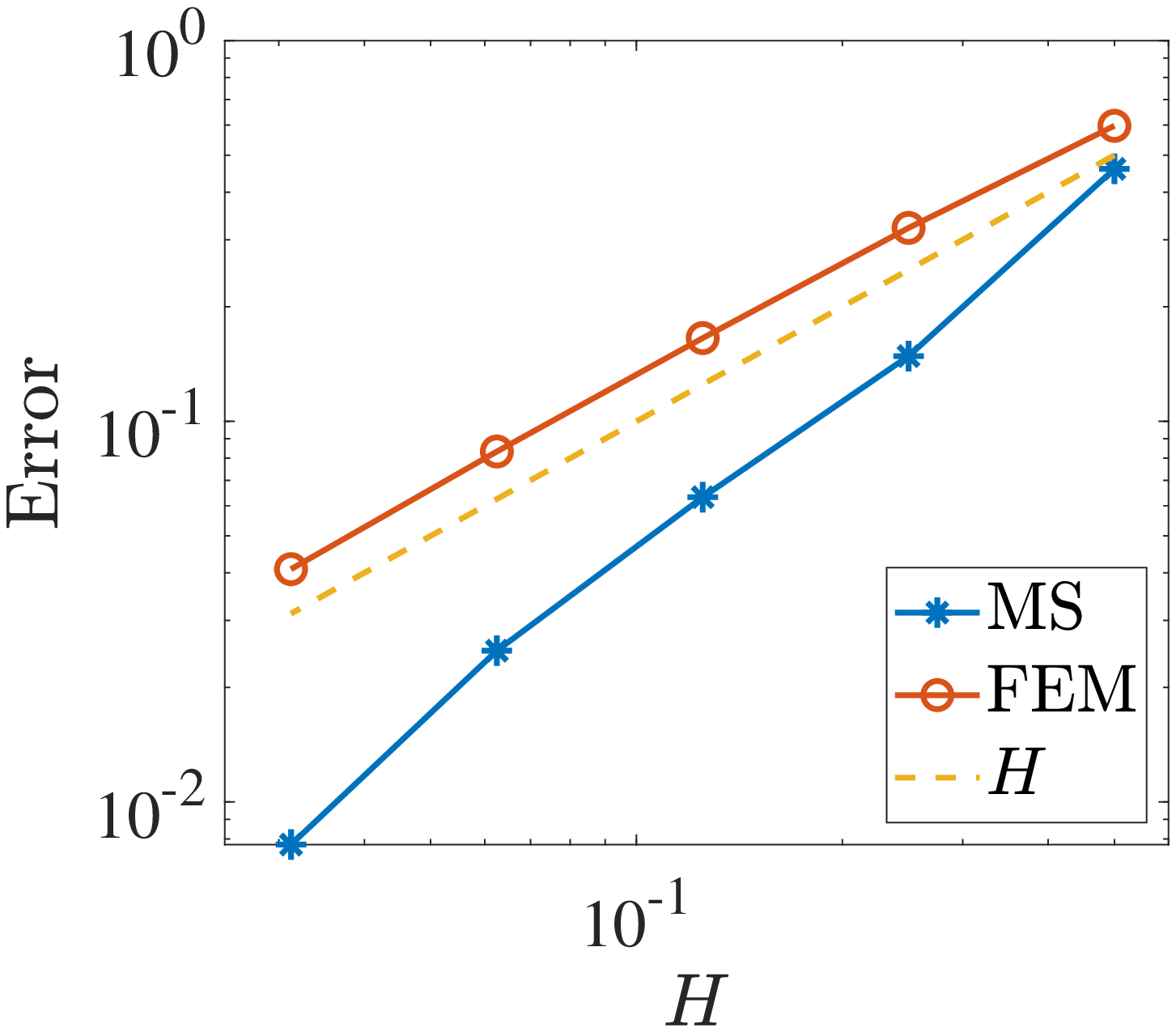}}
\subfigure[$K$-error random coefficients.]{\includegraphics[trim={0cm 0cm 0cm 0cm},clip,width=0.32\linewidth]{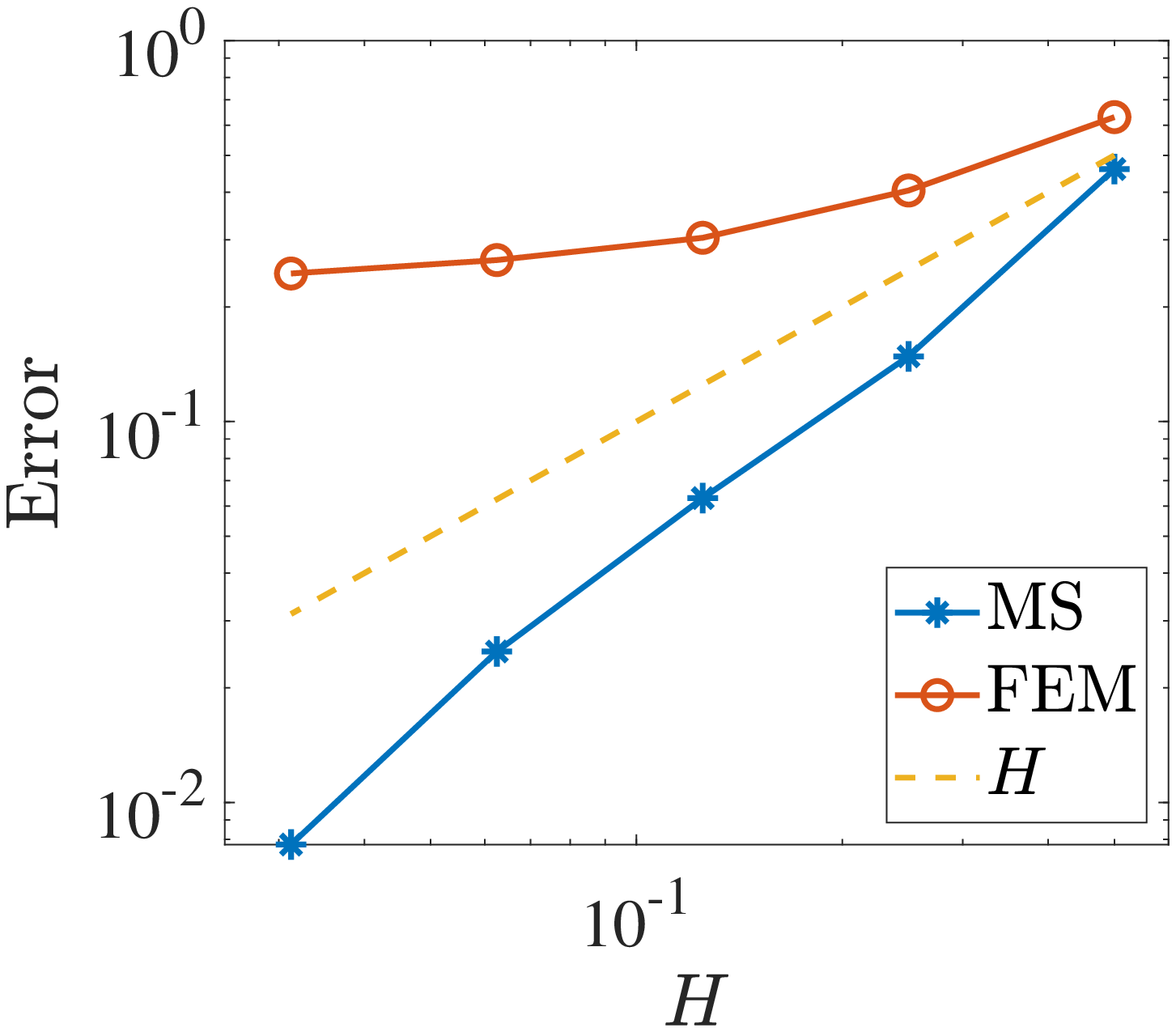}}
\subfigure[$K$-error random structure.]{\includegraphics[trim={0cm 0cm 0cm 0cm},clip,width=0.32\linewidth]{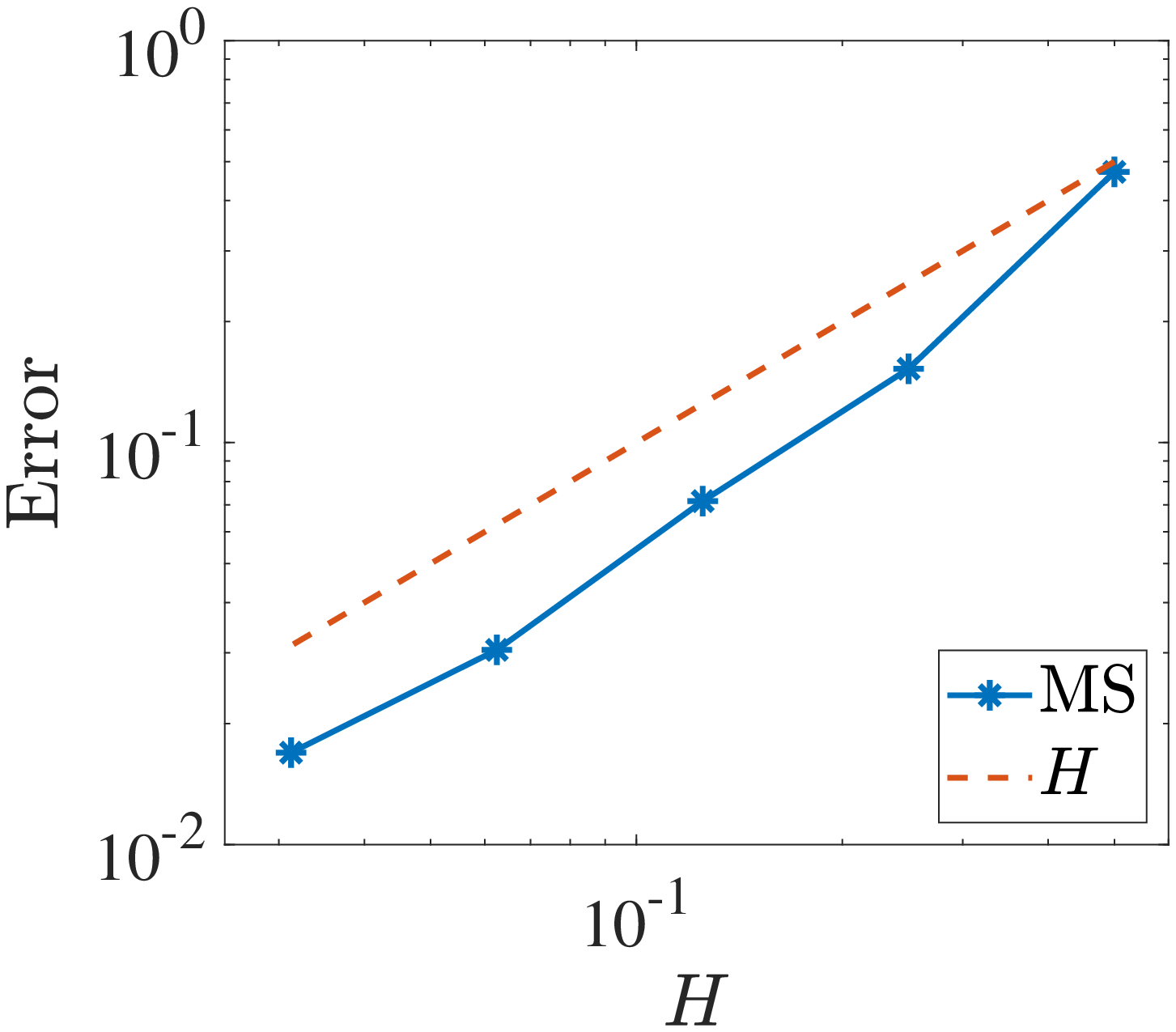}}
\subfigure[$L^2$-error basic.]{\includegraphics[trim={0cm 0cm 0cm 0cm},clip,width=0.32\linewidth]{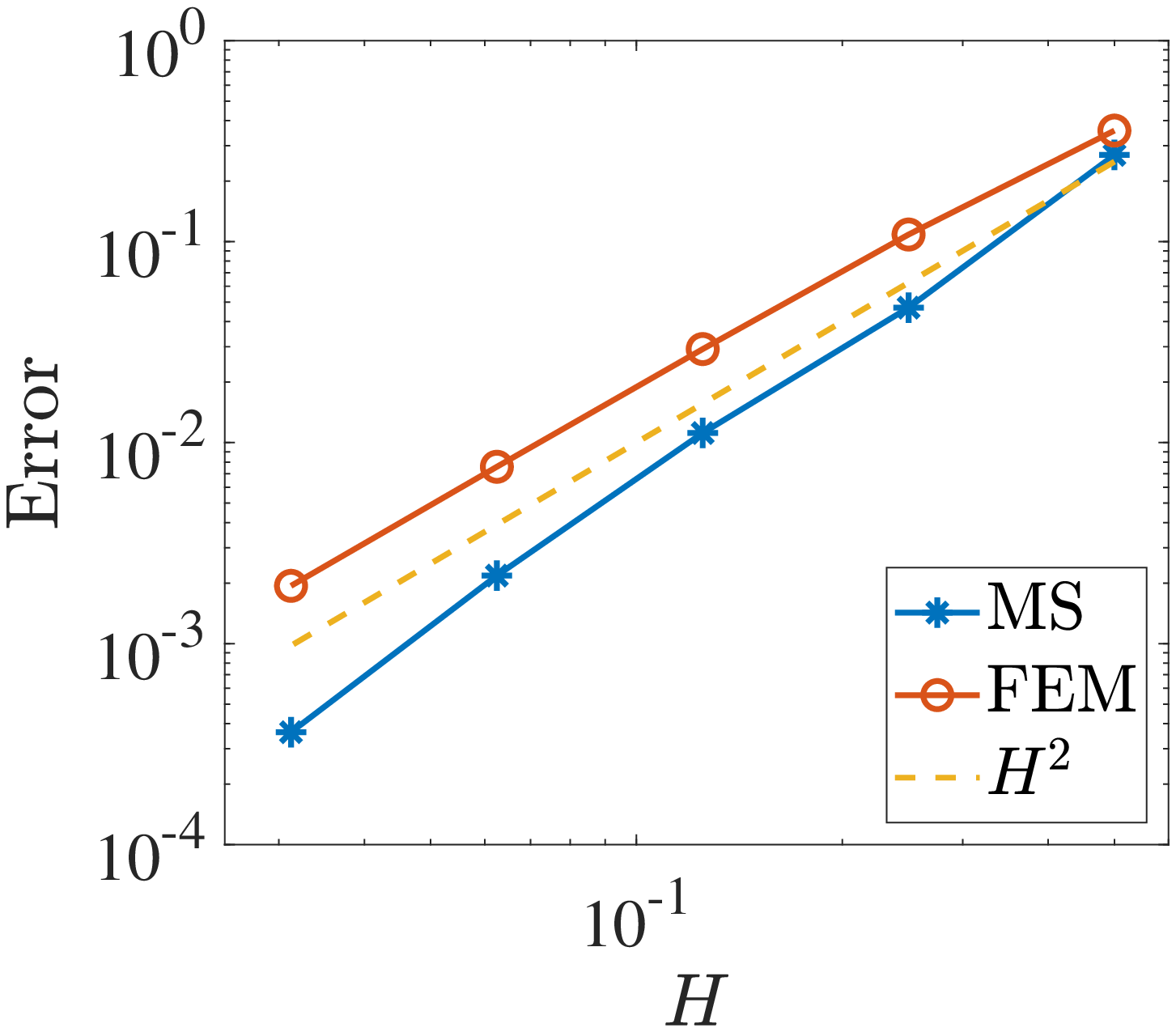}}
\subfigure[$L^2$-error random coefficients.]{\includegraphics[trim={0cm 0cm 0cm 0cm},clip,width=0.32\linewidth]{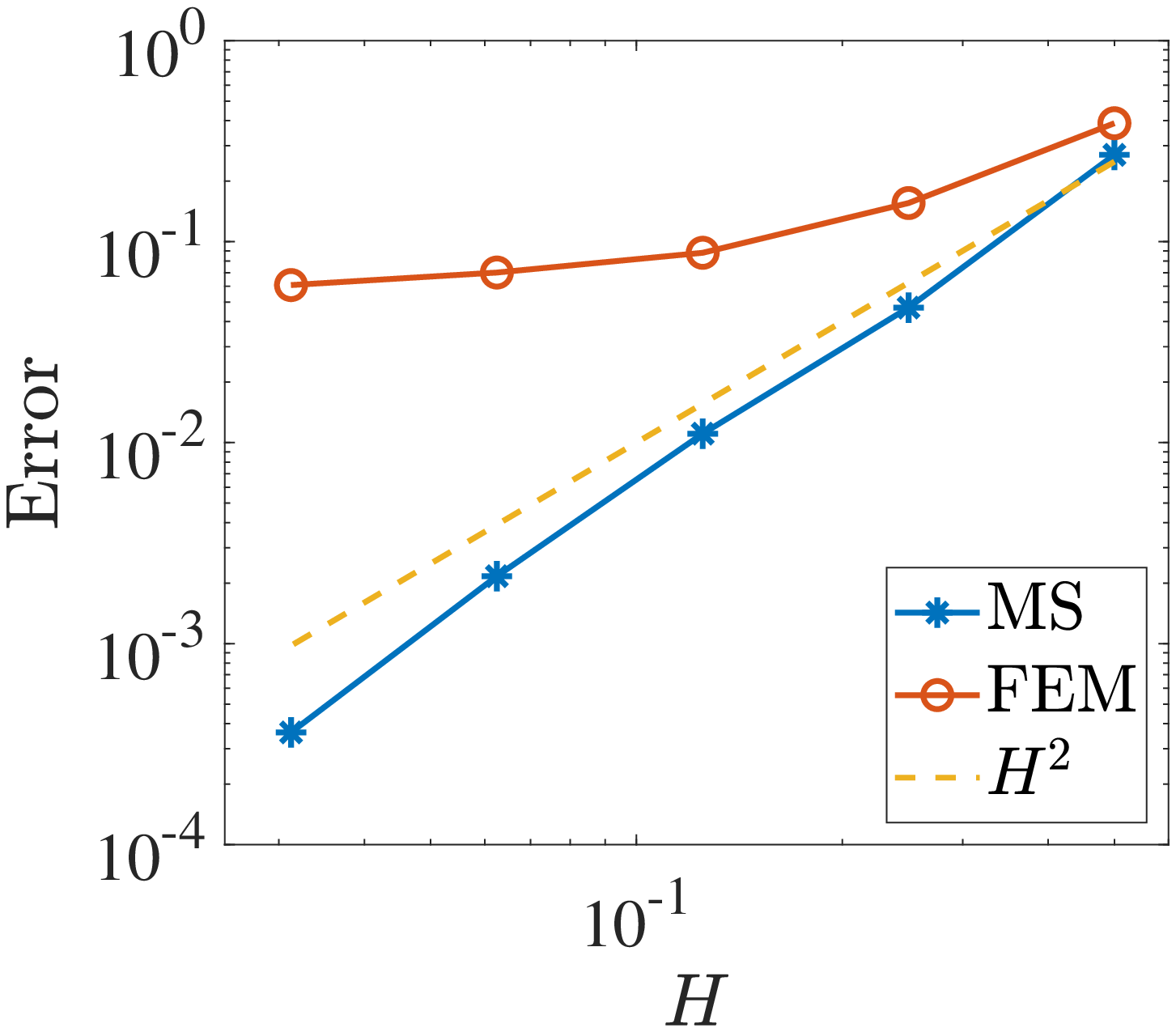}}
\subfigure[$L^2$-error random structure.]{\includegraphics[trim={0cm 0cm 0cm 0cm},clip,width=0.32\linewidth]{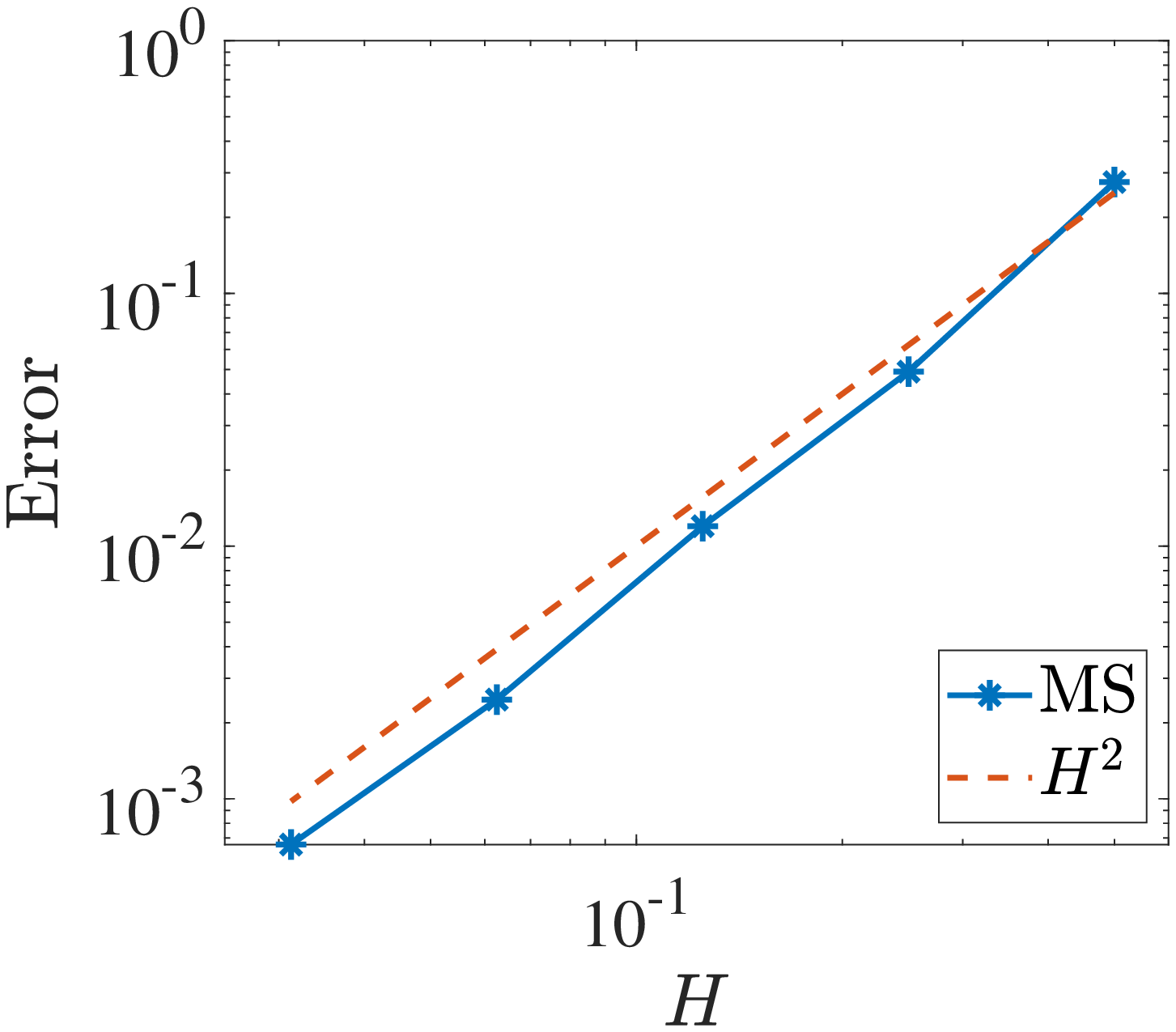}}
\caption{{Errors for the first problem \eqref{FirstProblem}.  The coarse mesh size $H$ and its square $H^2$ are included in the plots to clarify convergence rates. For the basic setup and the setup with random coefficients, also the so-called FEM-error is included. }}
\label{Fixed}
\end{figure*}

\begin{figure*}[ht!]
\centering
\subfigure[$K$-error basic.]{\includegraphics[trim={0cm 0cm 0cm 0cm},clip,width=0.32\linewidth]{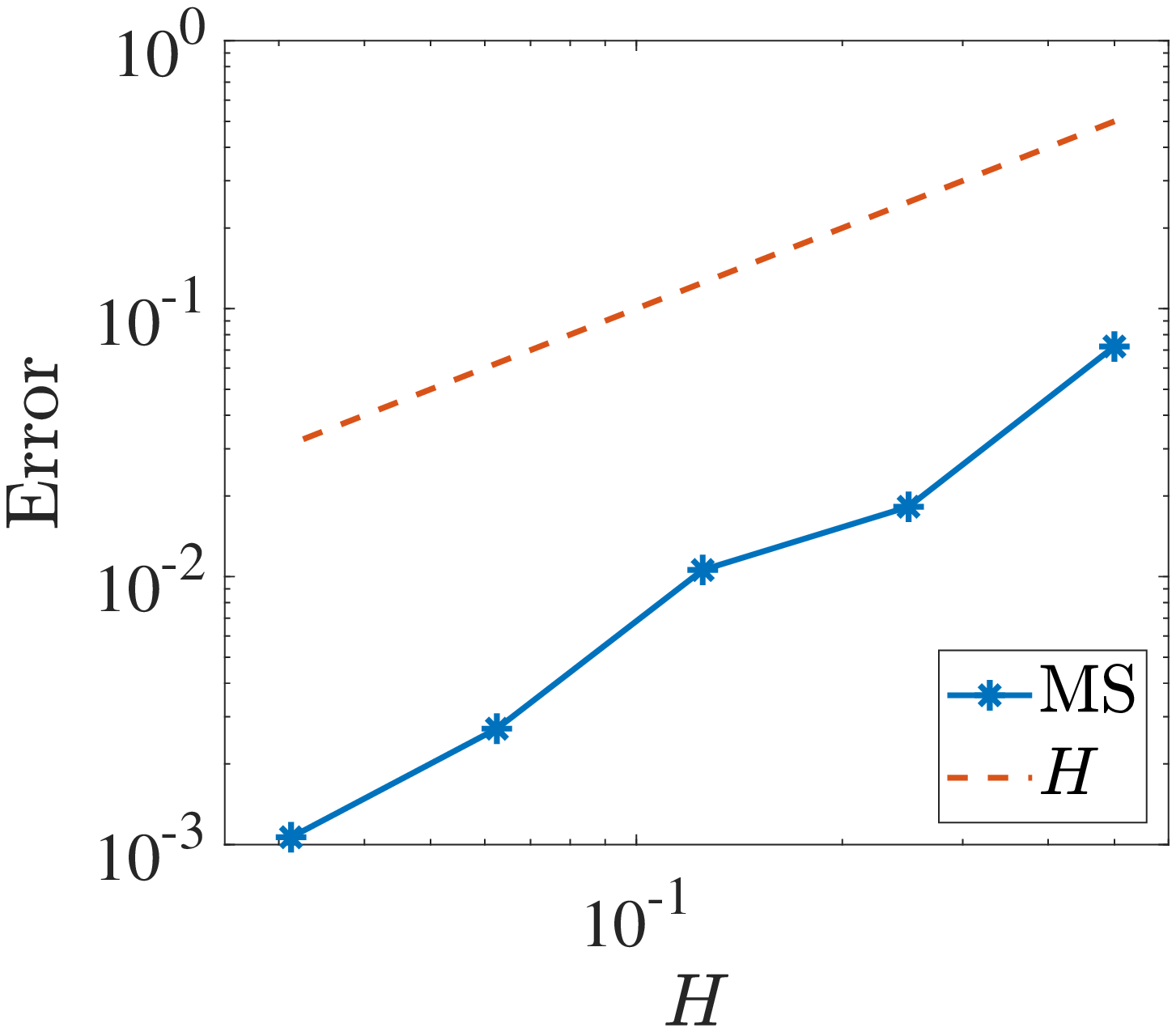}}
\subfigure[$K$-error random coefficients.]{\includegraphics[trim={0cm 0cm 0cm 0cm},clip,width=0.32\linewidth]{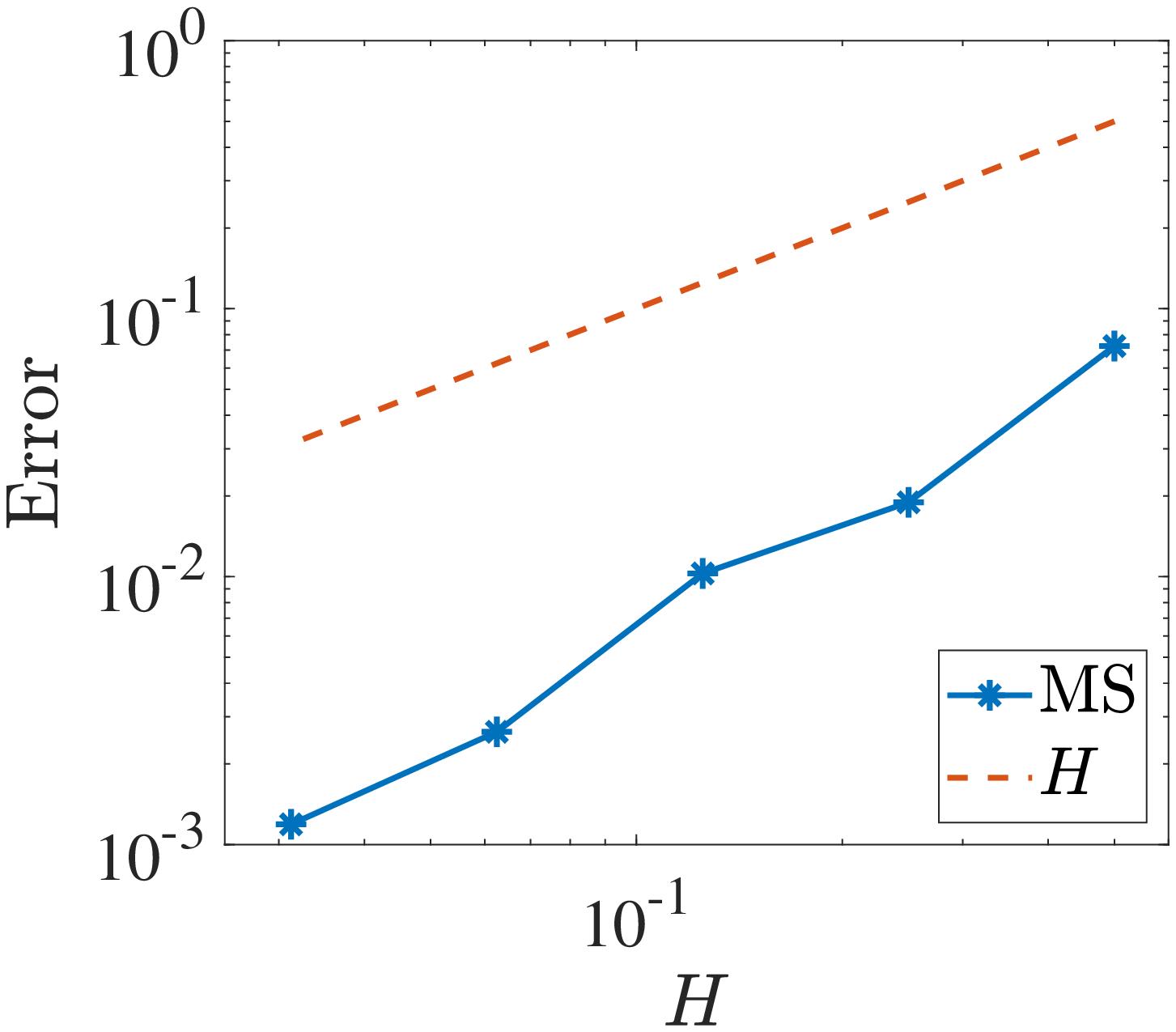}}
\subfigure[$K$-error random structure.]{\includegraphics[trim={0cm 0cm 0cm 0cm},clip,width=0.32\linewidth]{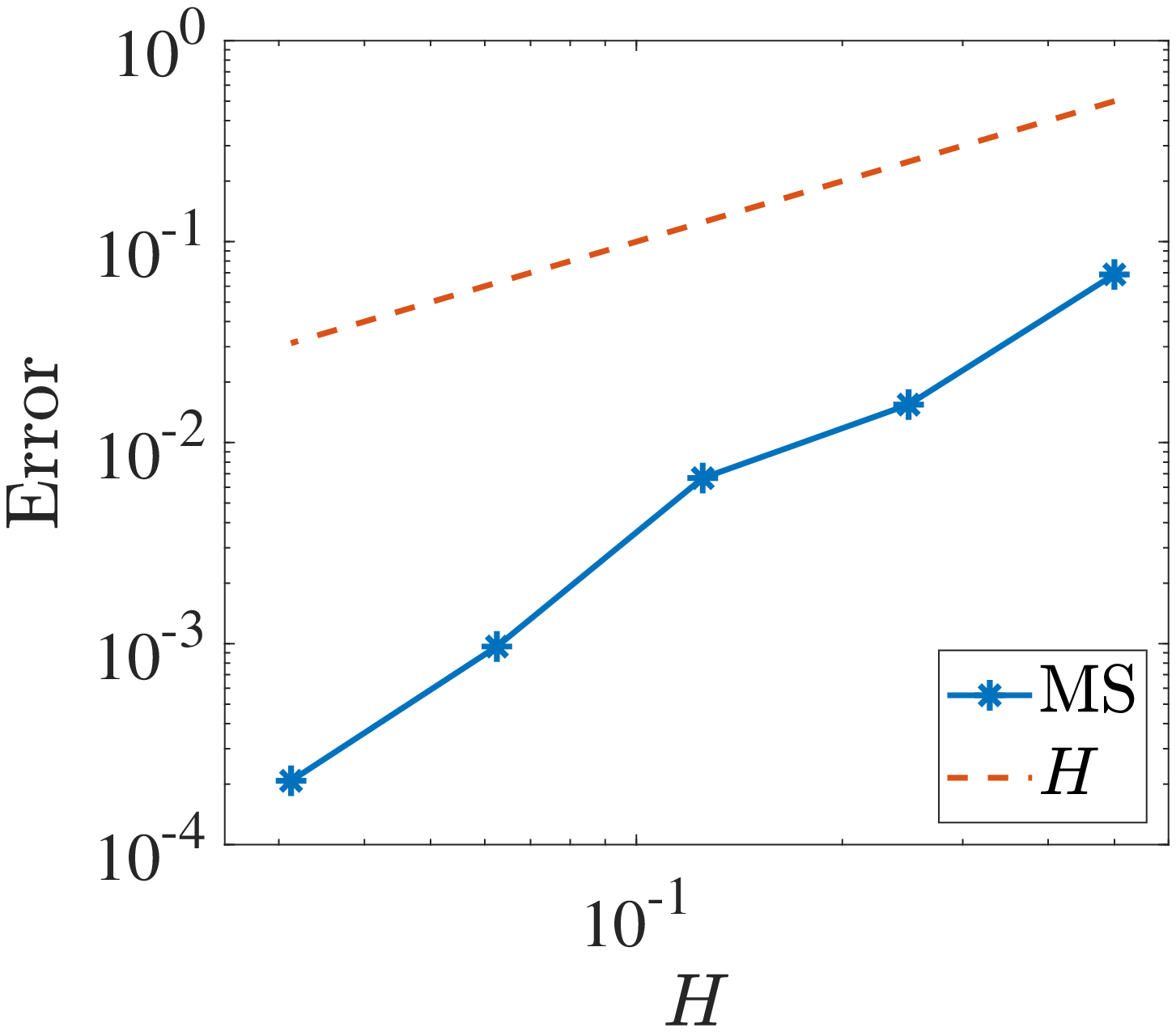}}
\subfigure[$L^2$-error basic.]{\includegraphics[trim={0cm 0cm 0cm 0cm},clip,width=0.3\linewidth]{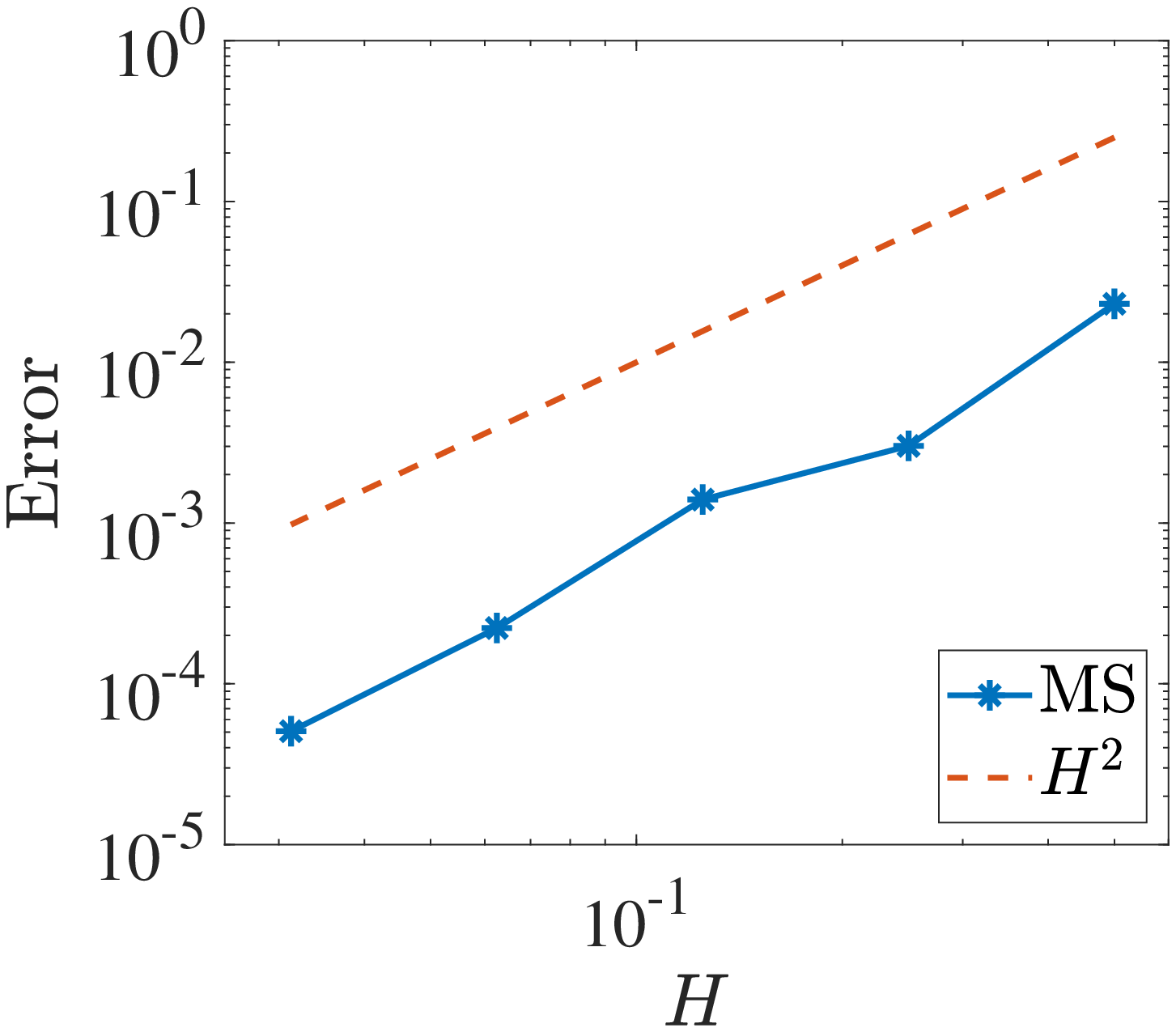}}
\subfigure[$L^2$-error random coefficients.]{\includegraphics[trim={0cm 0cm 0cm 0cm},clip,width=0.32\linewidth]{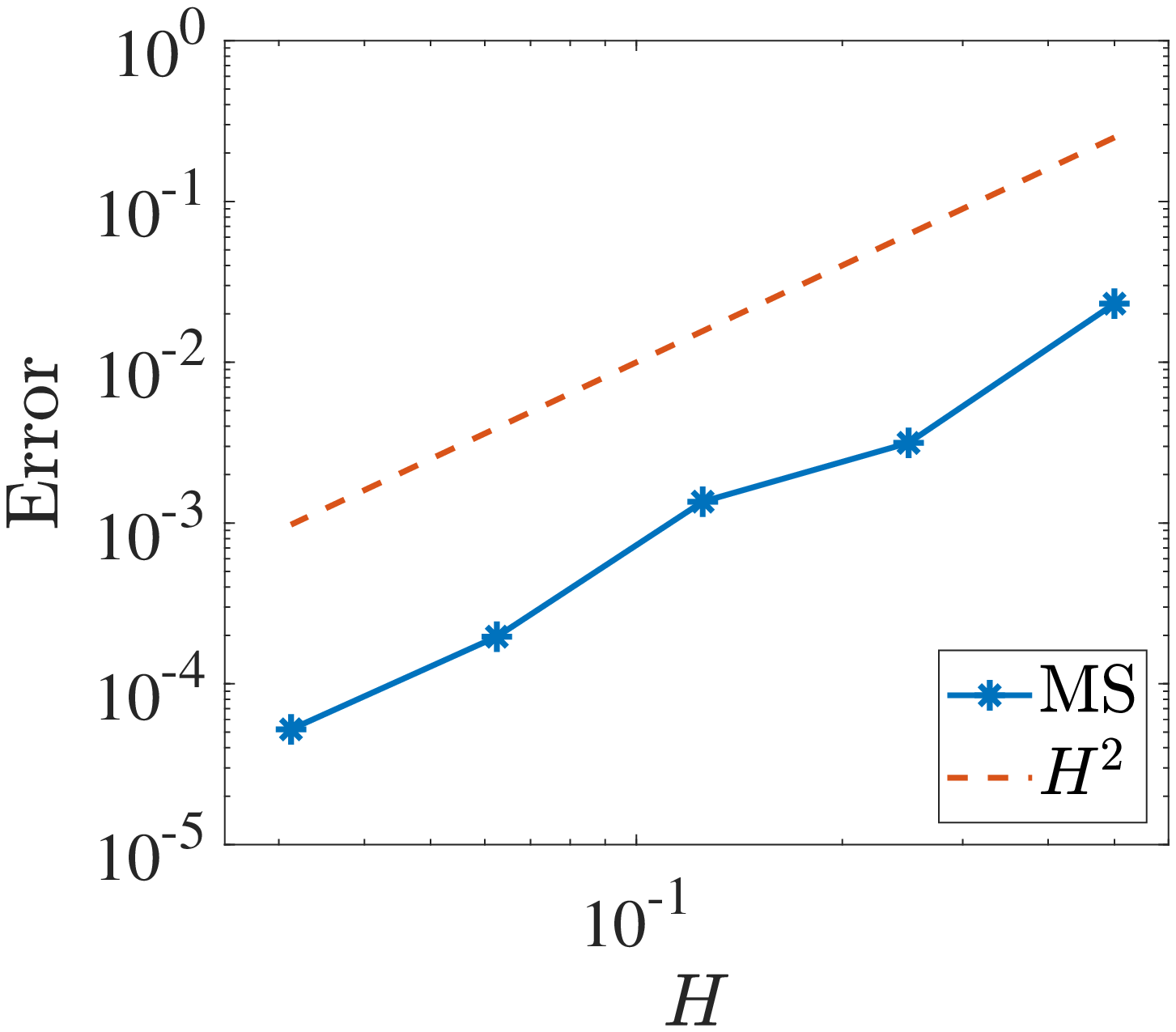}}
\subfigure[$L^2$-error random structure.]{\includegraphics[trim={0cm 0cm 0cm 0cm},clip,width=0.32\linewidth]{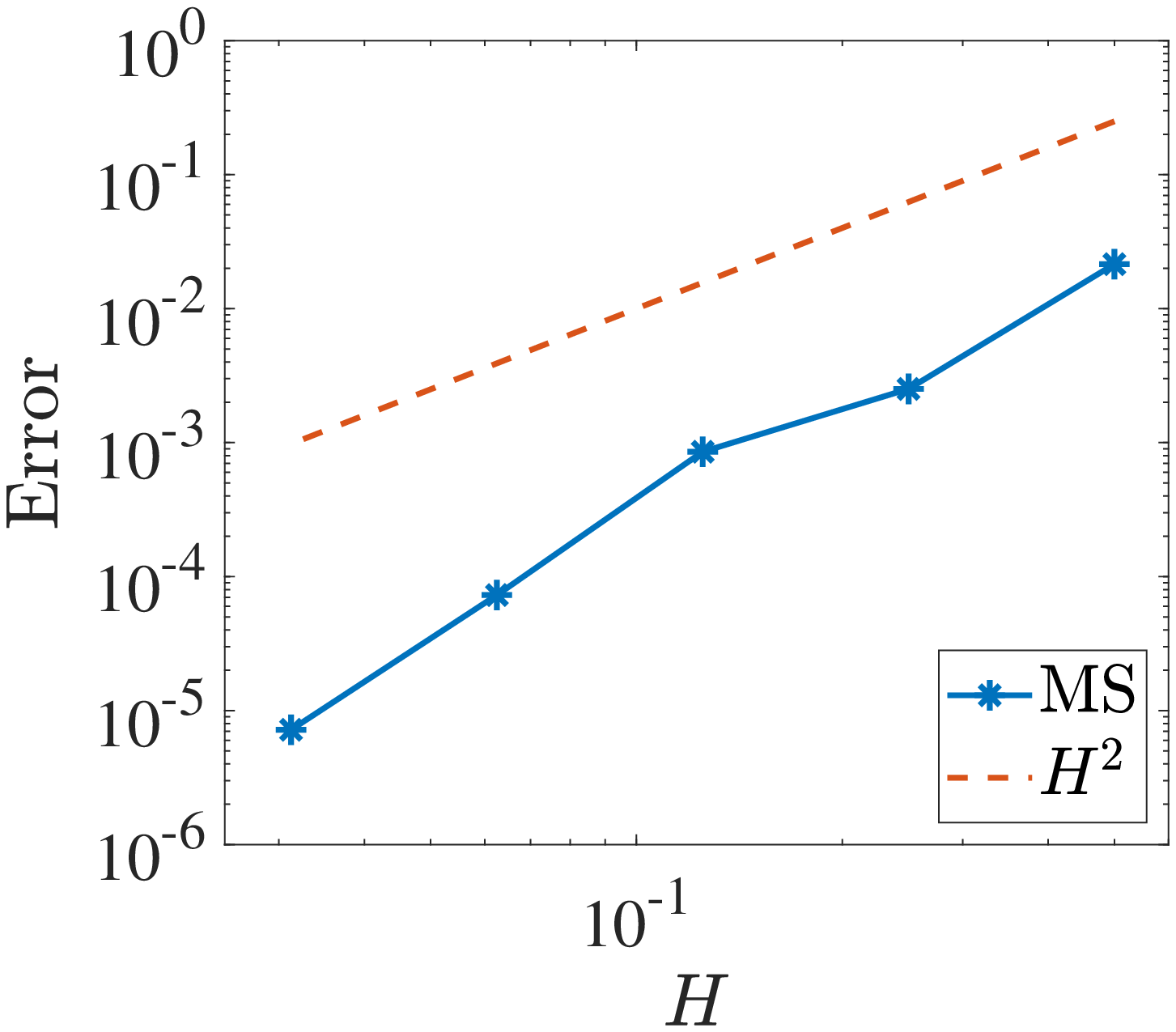}}
\caption{{Errors for the second problem \eqref{SecondProblem} with correction as in Section \ref{Displaced}. The coarse mesh size $H$ and its square $H^2$ are included in the plots to clarify convergence rates.}}
\label{Displaced}
\end{figure*}

\section{Conclusion and future work }
\label{Conclusions}
In this paper a numerical multiscale method for discrete networks is proposed. For a set of different numerical examples, the convergence rates of the proposed method are examined. For regular networks with low connectivity variation the method resembles the convergence rates of the ordinary FEM. For networks with randomly varying connectivity it is shown that the multiscale method performs better than ordinary FEM. The method is moreover used to solve network problems with  random structure, indicating error convergence rates at least linear in  energy norm and quadratic in  $L^2$-norm.

 A challenging theoretical problem for the future is to extend the result in \eqref{eq:decay} beyond finite element based discretization to more general networks, including the presented network model.  Further on, other fiber network models, for example beam based, as well as three-dimensional, should be considered. Thereby the presented numerical upscaling method will be applied to  realistic macroscale paper networks, investigating the problem size which can be studied and the computational efficiency of the proposed method. Moreover, the proposed method will be used together with the paper forming simulation framework presented in \cite{Mark1, Mark2, Lic} to study virtual paper sheets and macroscale mechanical properties such as tensile strength, tensile stiffness, bending resistance, $z$-strength and fracture propagation.

\subsubsection*{Acknowledgements}
This work is a part of the ISOP (Innovative Simulation of Paper) project which is performed by a consortium consisting of Albany International, Stora Enso and Fraunhofer-Chalmers Centre.



\end{document}